\newtheorem{theorem}{Theorem}
\newtheorem{lemma}{Lemma}
\newtheorem{conjecture}{Conjecture}
\newcommand{\del}{\partial}
\newcommand{\abs}[1]{\left\lvert #1 \right\rvert}
\DeclareMathOperator{\const}{const}
\DeclareMathOperator{\Vol}{Vol}
\DeclareMathOperator{\mm}{\mathsf{max-mult}}
\DeclareMathOperator{\straight}{\mathsf{straight}}
\DeclareMathOperator{\symm}{\mathsf{symm}}
\DeclareMathOperator{\sign}{sign}
\DeclareMathOperator{\dist}{dist}
\begin{document}

\title[Simplicial volume and traversing vector fields]{Using simplicial volume to count multi-tangent trajectories of traversing vector fields}
\author{Hannah Alpert}
\author{Gabriel Katz}
\address{MIT\\ Cambridge, MA 02139 USA}
\email{hcalpert@math.mit.edu}
\email{gabkatz@gmail.com}
\subjclass[2010]{53C23 (57N80, 58K45)}
\begin{abstract}
For a non-vanishing gradient-like vector field on a compact manifold $X^{n+1}$ with boundary, a discrete set of trajectories may be tangent to the boundary with reduced multiplicity $n$, which is the maximum possible.  (Among them are trajectories that are tangent to $\del X$ exactly $n$ times.)  We prove a lower bound on the number of such trajectories in terms of the simplicial volume of $X$ by adapting methods of Gromov, in particular his ``amenable reduction lemma''.  We apply these bounds to vector fields on hyperbolic manifolds.
\end{abstract}
\maketitle

\section{Introduction}

In this paper we consider a smooth vector field $v$ on a space $X$, which is a compact smooth manifold with boundary, with $\dim X = n+1$.  For any such vector field, we may form the space of trajectories, denoted $\mathcal{T}(v)$, of the flow along $v$, and the quotient map is denoted $\Gamma : X \rightarrow \mathcal{T}(v)$.  In general $\mathcal{T}(v)$ may not be a nice space, but it is nicer if $v$ is a \textbf{\emph{traversing vector field}}: a non-vanishing vector field such that every trajectory is either a singleton in $\del X$ or a closed segment.  Figure~\ref{2-dim} depicts a traversing vector field on a $2$-dimensional space, and the associated trajectory space.  One of the authors has explored this general setup in multiple papers beginning with~\cite{Katz09}, and in the paper~\cite{Katz2} he introduces the class of \textbf{\emph{traversally generic}} vector fields, which have certain nice properties.  In Theorem 3.5 of that paper, he proves that the traversally generic vector fields form an open and dense subset of the traversing vector fields.  Therefore, we study only the traversally generic vector fields; the definition and relevant properties appear in our Section~\ref{traversing-setup}.

\begin{figure}\label{2-dim}
\begin{center}
\includegraphics[width=3in]{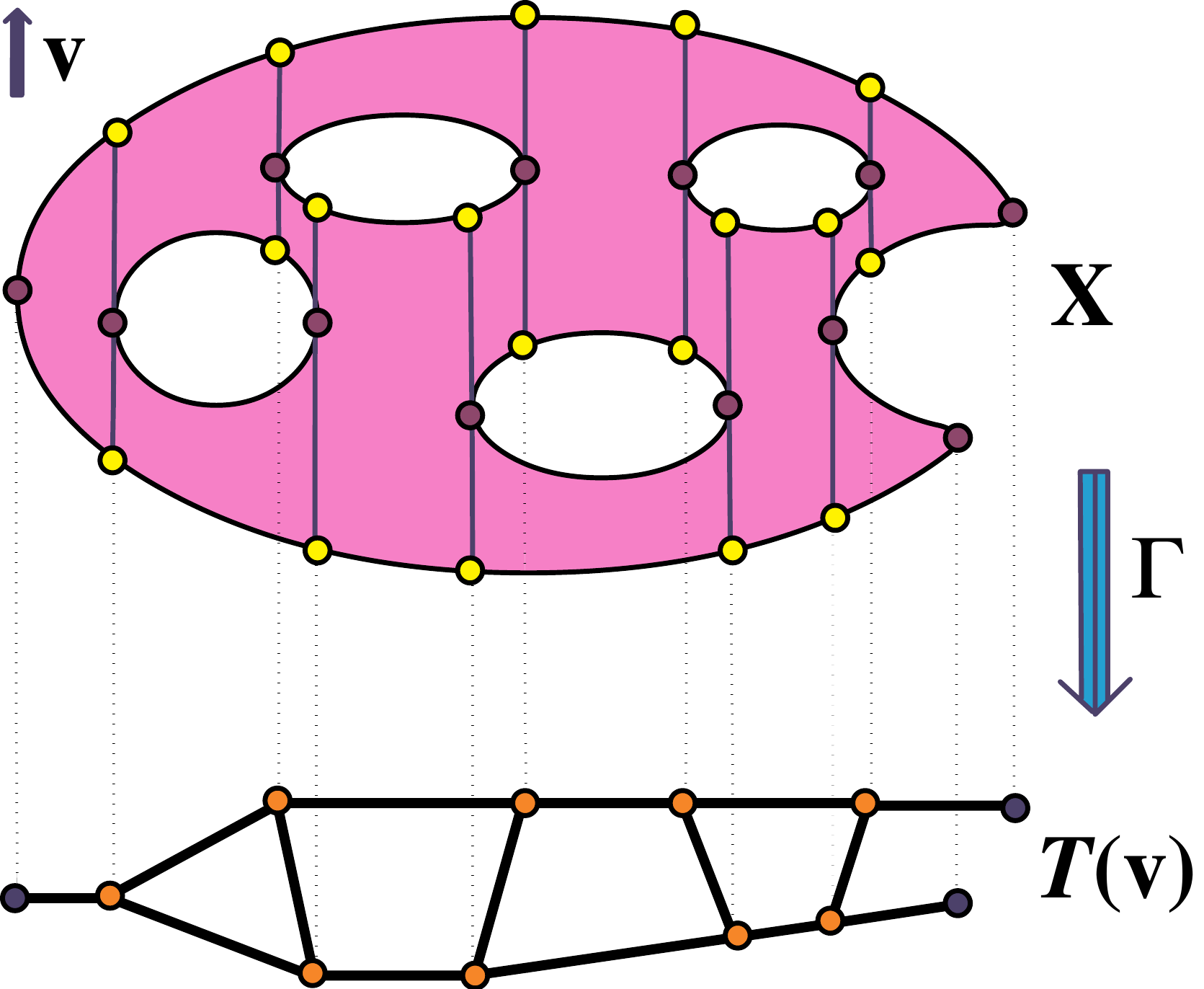}
\end{center}
\caption{The typical trajectory of a traversally generic vector field $v$ is a path meeting $\del X$ twice, each time with multiplicity $1$.  When $X$ is $2$-dimensional, the possible sequences of multiplicities along a trajectory are $(11)$, $(2)$, and $(121)$; in the trajectory space $\mathcal{T}(v)$, these correspond to edge points, vertices of degree $1$, and vertices of degree $3$.}
\end{figure}

Every traversally generic vector field $v$ has a well-defined \textbf{\emph{multiplicity}} $m(a)$ with which $v$ meets $\del X$ at a point $a$, and every trajectory $\gamma$ has a \textbf{\emph{reduced multiplicity}} $m'(\gamma)$, which is the sum over all $a \in \gamma \cap \del X$ of $(m(a) - 1)$.  (The full definition of multiplicity appears in Section~\ref{traversing-setup}.)  Every trajectory of a traversally generic vector field $v$ on a manifold $X^{n+1}$ has reduced multiplicity at most $n$, and so we denote by $\mm(v)$ the set of \textbf{\emph{maximum-multiplicity trajectories}}; that is, those trajectories $\gamma$ with $m'(\gamma) = n$.

\begin{theorem}\label{special-case} 
Let $M$ be a closed, oriented hyperbolic manifold of dimension \mbox{$n + 1 \geq 2$}, and let $X$ be the space obtained by removing from $M$ an open set $U$ satisfying the following properties:
\begin{itemize}
\item The boundary $\del U = \del X$ is a closed submanifold of $M$, possibly with multiple connected components; and
\item The closure $\overline{U}$ is contained in a topological open ball of $M$, possibly very far from round.
\end{itemize}
Let $v$ be a traversally generic vector field on $X$.  Then we have
\[\#\mm(v) \geq \const(n) \cdot \Vol M.\]
\end{theorem}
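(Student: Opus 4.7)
The plan is to deduce Theorem~\ref{special-case} from the paper's general lower bound on $\#\mm(v)$ in terms of the simplicial volume of $X$ (its main result, adapted from Gromov), combined with a topological comparison that exploits the hypothesis $\overline{U} \subset B$, and finally with the Gromov--Thurston proportionality relating $\|M\|$ to $\Vol M$.

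\emph{Step 1.} Invoke the paper's principal bound, expected to take the form $\#\mm(v) \geq c_n \cdot \|X, \del X\|$, where $\|X,\del X\|$ denotes the relative simplicial volume. This is where the ``amenable reduction lemma'' machinery adapted from Gromov does the heavy lifting, and it is not the focus of the special-case deduction itself.

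\emph{Step 2.} Show that $\|X, \del X\| \geq \|M\|$. First, excision gives $H_*(X, \del X) \cong H_*(M, \overline{U})$, realized on chains by a norm-nonincreasing map, so $\|M, \overline{U}\| \leq \|X, \del X\|$. Next, since $\overline{U} \subset B$, any relative cycle representing $[M, \overline{U}]$ is also a relative cycle representing $[M, B]$, so $\|M, B\| \leq \|M, \overline{U}\|$. Finally, because $B$ is contractible its fundamental group is trivial, hence amenable, and Gromov's amenable reduction lemma yields $\|M, B\| = \|M\|$. Chaining these inequalities gives the claim.

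\emph{Step 3.} By Gromov--Thurston proportionality, $\|M\| = \Vol(M)/v_{n+1}$, where $v_{n+1}$ is the volume of the regular ideal simplex in $\mathbb{H}^{n+1}$. Combining Steps 1--3 yields
\[\#\mm(v) \;\geq\; c_n \cdot \|X, \del X\| \;\geq\; c_n \cdot \|M\| \;=\; \frac{c_n}{v_{n+1}} \cdot \Vol M,\]
which is the desired bound with $\const(n) = c_n/v_{n+1}$.

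The main obstacle is lining Step 2 up with Step 1: one must verify that the particular variant of relative simplicial volume appearing in the main inequality is the one to which Gromov's amenable reduction lemma genuinely applies. The two hypotheses on $U$ are used precisely to make this possible. The first --- that $\del U = \del X$ is a closed codimension-one submanifold --- guarantees that $(X, \del X)$ is a well-behaved manifold-with-boundary pair with an honest relative fundamental class and a clean excision isomorphism. The second --- that $\overline{U}$ sits inside a topological open ball $B$ --- means that the region we are ``filling in'' has trivially amenable fundamental group, so the amenable reduction can absorb the filling at no cost in simplicial norm. Without the ball hypothesis, the inclusion $\overline{U} \hookrightarrow M$ could carry nontrivial topology and the passage from $\|X,\del X\|$ to $\|M\|$ would fail.
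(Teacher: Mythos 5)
The gap is in Step 1. The paper's main theorem is not of the form $\#\mm(v) \geq c_n\cdot\Vert X,\del X\Vert_\Delta$: it reads $\#\mm(v) \geq \const(n)\cdot\Vert\alpha_*[D(X)]\Vert_\Delta$, where $D(X)$ is the \emph{double} of $X$, $\alpha: D(X)\to Z$ is a map to a space with contractible universal cover, and---crucially---the $\alpha$-image of the fundamental group of each boundary component is required to be amenable as a hypothesis. An unconditional bound of $\#\mm(v)$ by the relative simplicial volume of $(X,\del X)$ is not proved in the paper and is not available from its localization lemma, whose hypothesis demands amenable $\alpha$-images for all strata of codimension less than $n+1$; the top-dimensional strata of $\del X$ are among these, so the amenability of the boundary must be arranged \emph{before} the main inequality is invoked, not afterwards as in your Step 2. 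Your Step 2 chain ($\Vert M,\overline U\Vert_\Delta \leq \Vert X,\del X\Vert_\Delta$, $\Vert M,B\Vert_\Delta \leq \Vert M,\overline U\Vert_\Delta$, $\Vert M,B\Vert_\Delta = \Vert M\Vert_\Delta$) consists of individually plausible pieces of Gromov theory, but it never connects to the quantity $\Vert\alpha_*[D(X)]\Vert_\Delta$ that the main theorem actually controls.

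The paper's deduction instead constructs one explicit map. Take $B' $ with $\overline U \subset B' \subset \overline{B'} \subset B$, collapse $\overline{B'}$ to a point $*$ while stretching the cylinder $B\setminus B'$ to obtain a degree-$1$ self-map of $M$; define $\alpha$ on one copy of $X$ as the restriction of this map and on the second copy of $X$ as the constant map at $*$. This single construction uses the ball hypothesis to achieve both required things at once: $\alpha_*\pi_1(\del X)=0$, which is amenable, so the main theorem applies with $Z=M$; and $\alpha$ has degree $1$, so $\alpha_*[D(X)]=[M]$ and the right-hand side becomes $\Vert[M]\Vert_\Delta$, which proportionality converts to $\const(n)\cdot\Vol M$. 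You correctly identified that the ball hypothesis exists to make an amenability argument work, but to repair the proof you must route that amenability through the hypothesis of the main theorem via such a collapse map of the double, rather than through a separate a posteriori comparison of relative simplicial volumes.
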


In particular, because $\Vol M$ is nonzero, there must be at least one maximum-multiplicity trajectory.  This theorem generalizes Theorem~7.5 of~\cite{Katz09}, which addresses the case where $n + 1 = 3$ and $U$ is any finite disjoint union of balls, with constant $1/\Vol(\Delta^3)$, where $\Delta^3$ denotes the regular ideal simplex in hyperbolic $3$-space.

Theorem~\ref{special-case} is a special case of the main theorem of this paper.  The main theorem is a variant of the theorem ``$\Delta$-Inequality for Generic Maps'' in Section 3.3 of Gromov's paper~\cite{Gromov09}.  It requires the notion of simplicial volume, which was introduced in~\cite{Gromov82} and is defined as follows.  For every singular chain $c$ with real coefficients, the norm of $c$, denoted $\Vert c \Vert_\Delta$, is the sum of absolute values of the coefficients.  For every real homology class $h$, the \textbf{\emph{simplicial norm}} (really a semi-norm) of $h$, denoted $\Vert h \Vert_{\Delta}$, is the infimum of $\Vert c \Vert_\Delta$ over all cycles $c$ representing $h$.  The simplicial norm is often called the \textbf{\emph{simplicial volume}} because it generalizes hyperbolic volume: if $M$ is any closed, oriented hyperbolic manifold of dimension $n$, then $\Vol M = \const(n) \cdot \Vert [M] \Vert_\Delta$ (Proportionality Theorem, p.~11 of~\cite{Gromov82}).

Our main theorem is stated as follows.  If $X$ is an oriented manifold with boundary, then let $D(X)$ denote the double of $X$, which is the oriented manifold obtained by gluing two copies of $X$ along their boundary $\del X$.

\begin{theorem}\label{main-theorem}
Let $X$ be a compact, oriented manifold with boundary, with $\dim X = n+1$.  Let $Z$ be a space with contractible universal cover, and let $\alpha: D(X) \rightarrow Z$ be a continuous map.  Assume that for each connected component of the boundary $\del X$, the corresponding subgroup of $\pi_1(Z)$ is an amenable group.  Then for every traversally generic vector field $v$, we have
\[\#\mm(v) \geq \const(n) \cdot \Vert \alpha_*[D(X)] \Vert_{\Delta}.\]
\end{theorem}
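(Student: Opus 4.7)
My plan is to translate the count $\#\mm(v)$ into an upper bound on $\Vert \alpha_*[D(X)] \Vert_\Delta$ by constructing, from the trajectory stratification of $X$, an explicit singular cycle in $Z$ representing $\alpha_*[D(X)]$ whose number of simplices is at most $c_0(n) \cdot \#\mm(v)$, and then using Gromov's amenable reduction to absorb the extra simplices appearing along the gluing locus $\partial X \subset D(X)$ at negligible $\ell^1$-cost.

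The first step is to invoke the structure theorem for traversally generic vector fields (Section~\ref{traversing-setup}) to obtain a canonical stratification of the $n$-dimensional trajectory space $\mathcal{T}(v)$ in which the $0$-dimensional strata are precisely the points $\gamma \in \mm(v)$ (since the codimension of a stratum in $\mathcal{T}(v)$ equals the reduced multiplicity). Each such vertex has a neighborhood of one of finitely many combinatorial types depending only on $n$. Barycentrically subdividing this stratification and pulling back through $\Gamma$ yields a decomposition of $X$ into $(n+1)$-dimensional ``trajectory blocks'' $B_\gamma$, one per $\gamma \in \mm(v)$ (with lower-dimensional interfaces), where each $B_\gamma$ has a standard polyhedral type that can be triangulated by at most $c_0(n)$ singular simplices. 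Assembling the pieces gives a relative singular fundamental cycle $\sigma_X$ of $(X, \partial X)$ with at most $c_0(n) \cdot \#\mm(v)$ simplices.

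Next I would double: the analogous construction on the mirror copy $\overline{X}$ produces $\sigma_{\overline{X}}$, and $\sigma_X + \sigma_{\overline{X}}$ is a chain in $D(X)$ whose boundary lies entirely in the seam $\partial X$. Pushing forward by $\alpha$ gives a chain in $Z$ with boundary supported in $\alpha(\partial X)$, and this is where the amenability hypothesis enters. Because $Z$ is aspherical and each component $\partial_i X$ maps to an amenable subgroup of $\pi_1(Z)$, Gromov's amenable reduction lemma (in the form used in Section 3.3 of~\cite{Gromov09}) provides, for every $\varepsilon > 0$, an $(n+1)$-chain in $Z$ with boundary $\alpha_*(\partial \sigma_X)$ and $\ell^1$-norm less than $\varepsilon$. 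Attaching such fillings on both sides turns $\alpha_*(\sigma_X + \sigma_{\overline{X}})$ into a cycle homologous to $\alpha_*[D(X)]$ of norm at most $2 c_0(n) \cdot \#\mm(v) + 2\varepsilon$. Letting $\varepsilon \to 0$ yields
\[
\Vert \alpha_*[D(X)] \Vert_\Delta \leq 2 c_0(n) \cdot \#\mm(v),
\]
which is the theorem with $\const(n) := 1/(2 c_0(n))$.

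The main obstacle, in my view, will be executing the amenable reduction step rigorously at the chain level. Gromov's lemma is naturally formulated via bounded cohomology duality, while the argument here needs a concrete statement: an $\ell^1$-small filling inside $Z$ of an arbitrary singular cycle carried on $\partial X$, with a filling constant that does not grow with the cycle. Producing such fillings seems to require passing through a combinatorial model of the universal cover of $Z$ (for instance Gromov's multicomplex) and verifying that the filling can be performed compatibly with the cell structure induced on $\partial X$ by the trajectory stratification of $v$. Once this technical ingredient is in place, the rest of the proof is bookkeeping in simplex counts.
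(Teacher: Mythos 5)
There is a genuine gap, and it sits in the central counting step. You propose to build a relative fundamental cycle of $(X,\del X)$ out of ``trajectory blocks'' $B_\gamma$, one per $\gamma \in \mm(v)$, so that the total number of simplices is at most $c_0(n)\cdot\#\mm(v)$. No such decomposition exists in general. The stratification of $\mathcal{T}(v)$ has its top-dimensional stratum $Y_n$ equal to the (open, dense, typically topologically complicated) set of generic trajectories, and the number of top cells needed to triangulate $\mathcal{T}(v)$ --- hence $X$ --- is governed by the topology of $X$, not by the number of $0$-dimensional strata. The degenerate case makes this vivid: if $v$ has no tangencies of high multiplicity (e.g.\ a product vector field on $\Sigma\times[0,1]$), then $\#\mm(v)=0$, and your construction would have to produce a fundamental cycle with zero simplices. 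The theorem is still true in that case, but only because $\Vert\alpha_*[D(X)]\Vert_\Delta$ is an infimum over \emph{real} cycles and vanishes by cancellation --- not because $X$ admits a tiny triangulation. Relatedly, you invoke amenability only for the components of $\del X$, to cap off the seam of the double cheaply; that is far too little. The argument needs amenability of $\alpha_*\pi_1(S)$ for \emph{every} positive-dimensional stratum $S$ of $D(X)$, including the interior strata $\Gamma^{-1}(\sigma)\setminus\del X$, which must be separately shown to have image in $\alpha_*\pi_1(\del X)$ by flowing along $v$ into the boundary.

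The mechanism that actually works (and that the paper uses) inverts your logic: start from an \emph{arbitrary} cycle representing $[D(X)]$, and use the amenability of all strata of codimension less than $n+1$ to average the straightened, antisymmetrized translates of that cycle over F\o lner sets (Lemma~\ref{amen-red}); this cancels, up to $\varepsilon$, every simplex that is not forced to sit near the $0$-dimensional strata. This is the Localization Lemma (Lemma~\ref{localization}) applied with $j=n+1$: it bounds $\Vert\alpha_*[D(X)]\Vert_\Delta$ by the \emph{stratified} simplicial norm of the restriction of the fundamental class to a small neighborhood $U$ of the $0$-dimensional strata. Those strata are the points of $\del X$ lying on maximum-multiplicity trajectories, of which there are at most $(n+2)\cdot\#\mm(v)$, and each admits a neighborhood modeled on one of finitely many local models (Part~\ref{models} of Lemma~\ref{stratification}), whose stratified norm is bounded by a constant $M_n$. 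That is where the count enters --- as a bound on localized norm contributions, not as a simplex count of a global triangulation. Your instinct that the technical heart is an $\ell^1$-small filling statement for amenable subsets is in the right family of ideas, but the filling is needed along all the positive-dimensional strata of the trajectory stratification, not just along $\del X$, and it must be combined with the localization step to reduce everything to the finitely many tangency points.
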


That is, the topological quantity $\Vert \alpha_*[D(X)]\Vert_{\Delta}$ is an obstruction to the existence of a traversally generic vector field without maximum-multiplicity trajectories.  In Section~\ref{traversing-setup} we summarize which properties of traversally generic vector fields are needed in order to apply the methods of Gromov from~\cite{Gromov09}.  In Section~\ref{main-section} we present full details for the Amenable Reduction Lemma and Localization Lemma of~\cite{Gromov09}, which are used to prove the ``$\Delta$-Inequality for Generic Maps'' there---Gromov's presentation is rough, so for the reader's convenience we include a full development of the proofs---and then we prove Theorem~\ref{main-theorem} and Theorem~\ref{special-case}.  The new insight of this paper is to bring Gromov's methods to the setting of traversally generic vector fields.

\section{Traversally generic vector fields}\label{traversing-setup}

The purpose of this section is to prove Lemma~\ref{stratification}, which describes the nice properties of a traversally generic vector field and which is a consequence of the work of one of the authors in the paper~\cite{Katz2}.  That paper introduces the definition of traversally generic (Definition 3.2) and proves that the traversally generic vector fields form an open dense subset of the traversing vector fields (Theorem 3.5).  The machinery behind the proof of density comes from the theory of singularities of generic maps, in particular from Thom-Boardman theory (see Theorem 5.2 from Chapter VI of~\cite{Golubitsky74}).  Below, before stating Lemma~\ref{stratification} we give the definitions of traversally generic vector fields and the reduced multiplicity of a trajectory.

The definition of traversally generic includes the notion of boundary generic (Definition 2.1 in ~\cite{Katz2}), which is defined as follows.  Given a traversing vector field $v$ on $X$, we let $\del_2 X$ denote the set of points where $v$ is tangent to $\del X$.  Alternatively, we view $v\vert_{\del X}$ as a section of the normal bundle $TX/T\del X$ of $\del X$ in $X$, and let $\del_2 X$ be the zero locus.  If the section corresponding to $v$ is transverse to the zero section, then $\del_2 X$ is a submanifold of $\del X$ with codimension $1$.  Then we repeat the process using the following iterative construction.  Let $\del_0 X = X$ and $\del_1 X = \del X$.  Once the submanifolds $\del_j X$ have been defined for all $j \leq k$, we view $v\vert_{\del_k X}$ as a section of the normal bundle of $\del_kX$ in $\del_{k-1}X$, and if it is transverse to the zero section, then the zero locus $\del_{k+1}X$ is a submanifold of $\del_k X$ with codimension $1$.  We say $v$ is \textbf{\emph{boundary generic}} if for all $k$, when we view $v\vert_{\del_kX}$ as a section of the normal bundle of $\del_k X$ in $\del_{k-1}X$, this section is transverse to the zero section.

If $v$ is boundary generic, then the \textbf{\emph{multiplicity}} $m(a)$ of any point $a \in X$ is defined to be the greatest $j$ such that $a \in \del_jX$.  By definition, if $m(a) = j > 0$, this means that $v$ is tangent to $\del_{j-1}X$ at $a$ but not tangent to $\del_jX$ there.  Because each $\del_j X$ has dimension $n+1 - j$, the greatest possible multiplicity is $m(a) = n+1$.

Being traversally generic is a property of each trajectory $\gamma$ of $v$.  Using the flow along $v$, we may identify all fibers of the normal bundle of $\gamma$ in $X$; we denote this normal space by $\mathsf{T}_*$.  For each point $a_i \in \gamma \cap \del X$, the tangent space $T\del_{m(a_i)}X$ is transverse to $\gamma$, so it can be viewed as a subspace $\mathsf{T}_i \subseteq \mathsf{T}_*$.  We say that a traversing vector field $v$ is \textbf{\emph{traversally generic}} if $v$ is boundary generic and if for every trajectory $\gamma$ of $v$, the collection of subspaces $\{\mathsf{T}_i(\gamma)\}_i$ is generic in $\mathsf{T}_*$; that is, the quotient map
\[\mathsf{T}_* \rightarrow \bigoplus_{a_i \in \gamma \cap \del X} \mathsf{T}_* / \mathsf{T}_i\]
is surjective.  Recall that the \textbf{\emph{reduced multiplicity}} of every trajectory $\gamma$ is the sum over all $a_i \in \gamma \cap \del X$ of $m(a_i) - 1$.  Thus, because $\dim \mathsf{T}_* = n$ and $\dim T_i = n - (m(a_i) - 1)$, the property of being traversally generic implies $m'(\gamma) \leq n$.

\begin{figure}\label{3-dim}
\begin{center}
\includegraphics[width=3in]{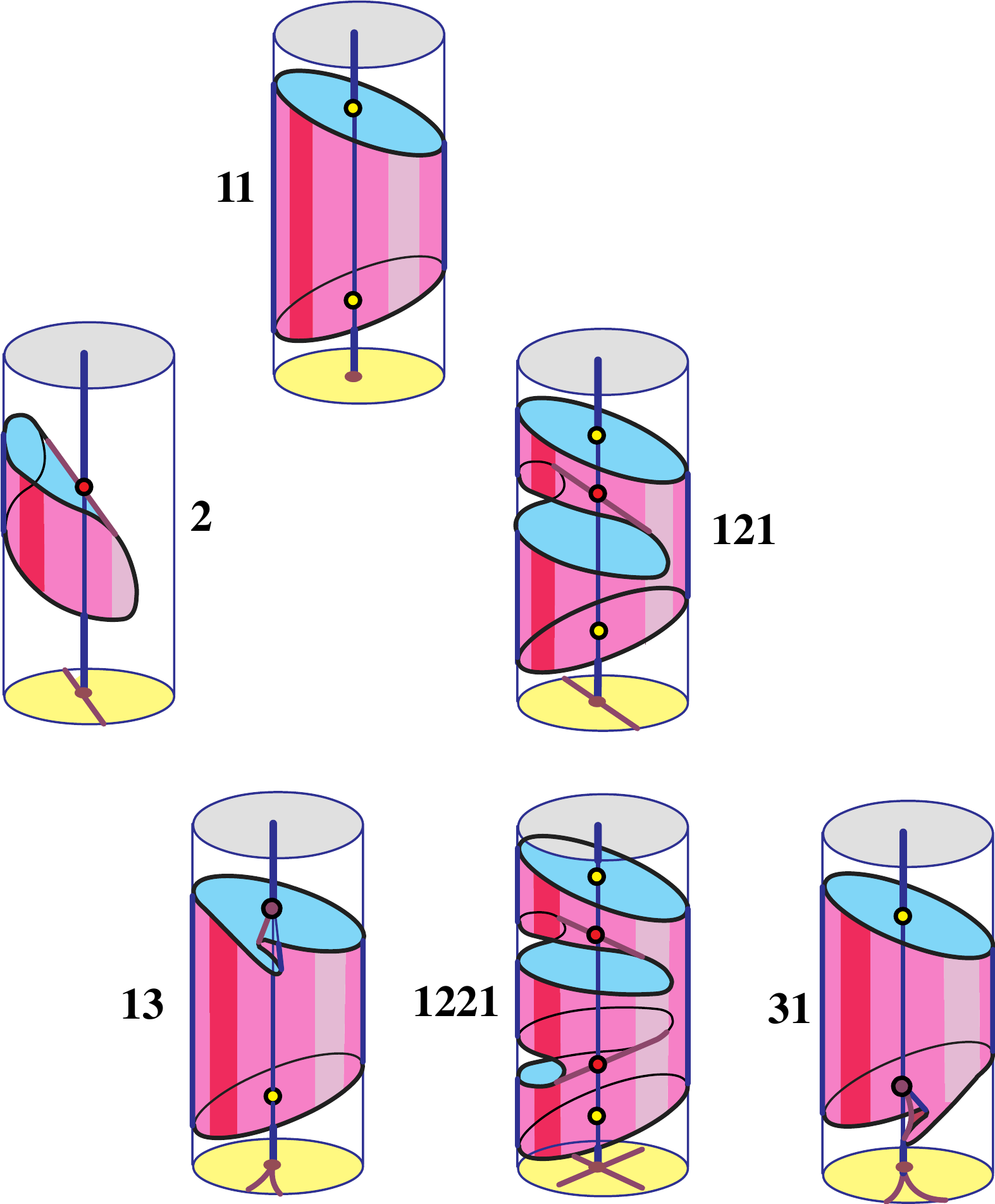}
\end{center}
\caption{When $X$ is $3$-dimensional, there are six possibilities for the local geometry of a trajectory, corresponding to the sequence of multiplicities with which that trajectory meets $\del X$.}
\end{figure}

Lemma~\ref{stratification} describes how every traversally generic vector field $v$ gives rise to stratifications of $\mathcal{T}(v)$ and of $X$; following~\cite{Gromov09} we define a \textbf{\emph{stratification}} of a space to be any partition with the following property: if a stratum $S$ intersects the closure $\overline{S'}$ of another stratum $S'$, then $S \subseteq \overline{S'}$.

\begin{lemma}\label{stratification}
Let $X$ be a compact manifold with boundary, with $\dim X = n + 1$.  The traversally generic vector fields $v$ on $X$ satisfy the following properties:
\begin{enumerate}
\item \label{strat-Y} For $k = 0, \ldots, n$, define $Y_k \subseteq \mathcal{T}(v)$ by
\[Y_k := \{\gamma \in \mathcal{T}(v) : m'(\gamma) = n-k\}.\]
Then every $Y_k$ is a $k$-dimensional manifold, the connected components of all $Y_k$ constitute a stratification of $\mathcal{T}(v)$, and the boundary of each stratum is a union of smaller-dimensional strata.
\item \label{cover} Over each stratum of $\mathcal{T}(v)$, the restriction of $\Gamma\vert_{\del X}$ is a finite covering space map, and the restriction of $\Gamma$ is a trivial bundle map with fiber equal to either an interval or a point.
\item \label{strat-X} For $k = 0, \ldots, n$, define $X_k^\del \subseteq \del X$ by
\[X_k^\del := \Gamma^{-1}(Y_k) \cap \del X,\]
and for $k = 1, \ldots, n+1$, define $X_k^\circ \subseteq X \setminus \del X$ by 
\[X_k^\circ := \Gamma^{-1}(Y_{k-1}) \setminus \del X.\]
Then every $X_k^\del$ and $X_k^\circ$ is a $k$-dimensional submanifold, the connected components of all $X_k^\del$ and $X_k^\circ$ constitute a stratification of $X$, and the boundary of each stratum is a union of smaller-dimensional strata.
\item \label{models} There is a finite collection, depending only on the dimension $n+1$ and not on $v$ or $X$, of stratified local models covering $X$.  That is, each local model is an $(n+1)$-dimensional stratified space with finitely many strata, and every point in $X$ has a neighborhood diffeomorphic to one of the local models in a way that preserves the stratification.
\end{enumerate}
\end{lemma}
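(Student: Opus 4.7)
The plan is to derive the lemma as a packaging of the singularity-theoretic analysis of traversally generic vector fields carried out in \cite{Katz2}. Boundary genericity by itself provides most of the combinatorial scaffolding on $\del X$: the nested submanifolds $\del_{k+1}X\subset\del_kX$ arise inductively as transverse zero loci, so each has codimension one in the previous and $\dim \del_kX = n+1-k$. The level sets of the multiplicity function $m:\del X\to\{1,\ldots,n+1\}$, with $\{m=j\}$ an open submanifold of $\del_jX$, therefore stratify $\del X$, and this will be the starting point for the stratifications of both $X$ and $\mathcal{T}(v)$.

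The next step is to use traversal genericity to obtain a local normal-form description in a neighborhood of each trajectory $\gamma$. Let $a_1,\ldots,a_r$ be the boundary contacts of $\gamma$ with multiplicities $m_1,\ldots,m_r$, so $m'(\gamma)=\sum(m_i-1)$. In the normal space $\mathsf{T}_*$ (of dimension $n$), the local projection of $\del_{m_i}X$ near $a_i$ along the flow is a submanifold of codimension $m_i-1$, and traversal genericity is exactly the assertion that these $r$ submanifolds meet transversely. A tubular neighborhood argument in the spirit of Thom--Boardman theory (Chapter VI of \cite{Golubitsky74}), combined with the flow box between consecutive contacts, then produces a stratified diffeomorphism from a neighborhood of $\gamma$ to a standard model determined only by the ordered tuple $(m_1,\ldots,m_r)$. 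Since $m'(\gamma)\leq n$ bounds both $r$ and each $m_i$, only finitely many such tuples arise, establishing part~\ref{models}.

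With the local models in hand, parts~\ref{strat-Y}, \ref{cover}, and~\ref{strat-X} reduce to direct inspection. For part~\ref{strat-Y}, the transverse intersection in $\mathsf{T}_*$ has codimension $\sum(m_i-1)=m'(\gamma)=n-k$, so the space of nearby trajectories of the same combinatorial type is $k$-dimensional; this identifies $Y_k$ locally with an open subset of $\mathbb{R}^k$, and the closure relations among strata are read off from the stratification of the local model, since letting two contact points collide or raising a multiplicity passes to a lower-dimensional stratum. For part~\ref{cover}, the local model displays $\Gamma\vert_{\del X}$ over a stratum as the $r$-to-$1$ projection from its $r$ boundary sheets onto the base, and displays $\Gamma$ as a trivial interval bundle (for trajectories with $r\geq 1$ and at least two internal pieces) or as a homeomorphism (for singleton trajectories in $\del X$ with $m=n+1$). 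Part~\ref{strat-X} follows by pulling back the stratification of $\mathcal{T}(v)$ under $\Gamma$ and separating boundary from interior; the dimension counts for $X_k^\del$ and $X_k^\circ$ come from the covering and bundle descriptions of part~\ref{cover}.

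The main obstacle, and the piece I would lean most heavily on \cite{Katz2} for, is establishing the standardness of the local models from raw transversality in $\mathsf{T}_*$: the transverse-intersection data has to be promoted to a diffeomorphism of a full neighborhood of $\gamma$ in $X$ that respects all nested strata $\del_jX$ simultaneously and is flow-compatible. Once this is granted, the dimensions, covering structure, and closure relations among strata are forced by the finite list of models, and the four conclusions follow uniformly.
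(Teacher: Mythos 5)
Your proposal follows essentially the same route as the paper: both reduce the lemma to the existence of standard local models in a neighborhood of each trajectory, obtained from \cite{Katz2} (the paper cites Theorem~3.1 there, that traversally generic implies versal, to get explicit polynomial coordinates $(u,\vec{x}_1,\ldots,\vec{x}_p,\vec{y})$ with $v = \frac{\del}{\del u}$), and then read off all four conclusions by inspecting the finite list of models indexed by multiplicity sequences. You correctly identify the promotion of transversality in $\mathsf{T}_*$ to a flow-compatible normal form as the step to outsource to \cite{Katz2}, which is exactly what the paper does.

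One small but genuine gap: in Part~\ref{cover} the claim is that $\Gamma$ restricted over a stratum of $\mathcal{T}(v)$ is a \emph{trivial} bundle, whereas inspection of the local models only yields \emph{local} triviality over the stratum. The paper closes this with a separate global observation---the fibers are oriented intervals (oriented by $v$), and every bundle of oriented intervals over any base is trivial---which your ``direct inspection'' argument omits. Also, as a minor point, the fiber of $\Gamma$ is a single point for any singleton trajectory, i.e.\ whenever $p=1$ and $m(a_1)$ is even, not only when $m = n+1$; such strata occur in every dimension where an even multiplicity is admissible, not just in the $0$-dimensional stratum.
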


The paper~\cite{Katz2} proves (Theorem~3.1) an equivalent characterization of traversally generic vector fields, called versal vector fields (Definition 3.5); the main ingredient in the proof is the Malgrange preparation theorem (see, for instance, Theorem~2.1 from Chapter~IV of~\cite{Golubitsky74}).  When we use the description of versal vector fields, our Lemma~\ref{stratification} becomes straightforward.  In the remainder of the section, we define versal vector fields and explain why they satisfy the properties in Lemma~\ref{stratification}.

For a vector field to be versal means that in a neighborhood of each trajectory there are local coordinates of a certain form.  Figure~\ref{3-dim} depicts the local geometry of these neighborhoods in the case $\dim X = 3$.  In preparation for the definition of a versal vector field, we first define local coordinates near one point.  For each $m$ with $1 \leq m \leq n+1$, we use variables $u \in \mathbb{R}$, $\vec{x} = (x_0, \ldots, x_{m-2}) \in \mathbb{R}^{m-1}$, and $\vec{y} \in \mathbb{R}^{n-(m-1)}$, and define
\[P_m(u, \vec{x}) = u^m + \sum_{\ell = 0}^{m-2} x_\ell u^\ell = u^m + x_{m-2}u^{m-2} + \cdots + x_1u + x_0.\]
We consider the vector field $\frac{\del}{\del u}$ on the space 
\[X_+ = \{(u, \vec{x}, \vec{y}) : P_m(u, \vec{x}) \geq 0\}\]
or on the space
\[X_- = \{(u, \vec{x}, \vec{y}) : P_m(u, \vec{x}) \leq 0\}.\]
The trajectories above each fixed $(\vec{x}, \vec{y})$ stretch between the roots of $P_m(u, \vec{x})$ as a function of $u$.  If $m$ is odd, then $X_+$ has unbounded trajectories in the positive direction (that is, $u \rightarrow +\infty$), and $X_-$ has unbounded trajectories in the negative direction ($u \rightarrow -\infty$).  If $m$ is even, then $X_+$ has unbounded trajectories in both directions, and $X_-$ has only bounded trajectories.  In particular, if $m$ is even, then the trajectory in $X_+$ through the point $(u, \vec{x}, \vec{y}) = 0$ is only that one point.  The vector fields in these local models are boundary generic, and the multiplicity of each point $(u, \vec{x}, \vec{y})$ in the sense defined earlier is equal to the multiplicity of vanishing of $P_m(u, \vec{x})$ as a function of $u$.

A \textbf{\emph{versal vector field}} is described by local coordinates in a neighborhood of each trajectory $\gamma$, as follows.  Suppose $\gamma$ enters $X$ at $a_1 \in \del X$, and then meets $\del X$ at $a_2, \ldots, a_p \in \del X$, in order, exiting at $a_p$.  For each $i$ with $1 \leq i \leq p$, let $\vec{x}_i$ denote a variable in $\mathbb{R}^{m(a_i) - 1}$, and let $\vec{y}$ denote a variable in $\mathbb{R}^{n-m'(\gamma)}$.  Then the coordinates are
\[(u, \vec{x}_1, \ldots, \vec{x}_p, \vec{y}) \in \mathbb{R}^{n+1},\]
and $X$ corresponds to the subset
\[\{(u, \vec{x}_1, \ldots, \vec{x}_p, \vec{y}) : P_{m(a_i)}(u-i, \vec{x}_i) \geq 0 \ \forall i < p,\ P_{m(a_p)}(u-p, \vec{x}_p) \leq 0\},\]
and $v$ corresponds to the vector field $\frac{\del}{\del u}$.  The trajectory $\gamma$ corresponds to the line $(\vec{x}_1, \ldots, \vec{x}_p, \vec{y}) = 0$, and the points $a_1, \ldots, a_p$ correspond to the points $u = 1, \ldots, p$.  Note that for $X$ to be nonempty, we must have either $p = 1$ and $m(a_p)$ is even, or $p > 1$ and both $m(a_1)$ and $ m(a_p)$ are odd while all other $m(a_i)$ are even.  

\begin{proof}[Proof of Lemma~\ref{stratification}]
Because every traversally generic vector field is versal (Theorem 3.1 of~\cite{Katz2}), it suffices to check Lemma~\ref{stratification} for the versal vector fields.  Part~\ref{models} is immediate: there is one local model for each sequence of multiplicities corresponding to reduced multiplicity at most $n$.  Parts~\ref{strat-Y} and~\ref{strat-X} follow from examining the local models: near each trajectory $\gamma$, the only trajectories with reduced multiplicity equal to $m'(\gamma)$ are those with all coordinates $\vec{x}_i$ equal to $0$ (with $\vec{y}$ and $u$ varying).  To prove Part~\ref{cover}, we see from the local models that $\Gamma$ is a locally trivial bundle map over each stratum of $\mathcal{T}(v)$, with fiber equal to either an interval or a point.  Then, because the interval is oriented, and every bundle of oriented intervals is trivial, the bundle must be trivial.
\end{proof}

\section{Main theorem}\label{main-section}

Let $Z$ be a topological space, and $c \in C_*(Z)$ be a singular cycle.  We use the following general strategy to find an upper bound for the simplicial norm $\Vert [c] \Vert_\Delta$: first we generate a large number of cycles homologous to $c$, all with the same norm and with many simplices in common (but with different signs).  Then we take the average of these cycles; the result is homologous to $c$, and because of the cancellation it has small norm.

In order for this cancellation to be possible, we use the simplex-straightening technique from the case where $Z$ is a complete hyperbolic manifold, but we apply a slight generalization for the case where $Z$ is any space with contractible universal cover---that is, $Z = K(\pi_1(Z), 1)$.

\begin{lemma}[\cite{Gromov82}, p.~48]
Let $Z$ be a space with contractible universal cover $\widetilde{Z}$.  Then there is a ``straightening'' operator 
\[\straight : C_*(Z) \rightarrow C_*(Z)\] 
with the following properties:
\begin{itemize}
\item For each simplex $\sigma \in C_*(Z)$, the straightened version $\straight(\sigma)$ is a simplex of the same dimension with the same sequence of vertices.
\item If two simplices $\sigma_1, \sigma_2 \in C_*(Z)$ have the same sequence of vertices, and their lifts $\widetilde{\sigma}_1, \widetilde{\sigma}_2 \in C_*(\widetilde{Z})$ to the universal cover also have the same sequence of vertices, then $\straight(\sigma_1) = \straight(\sigma_2)$.
\item $\straight$ commutes with the boundary map $\del$; that is, $\straight$ is a chain-complex endomorphism.
\item $\straight$ commutes with the standard action of the symmetric group $S_{j+1}$ on each $C_j(Z)$.
\item $\straight$ is chain homotopic to the identity.
\end{itemize}
\end{lemma}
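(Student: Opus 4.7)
The plan is to construct $\straight$ by first defining, by induction on dimension, a ``straight simplex'' $[\tilde z_0,\ldots,\tilde z_j]\colon\Delta^j\to\widetilde Z$ for every ordered $(j{+}1)$-tuple in the contractible universal cover $\widetilde Z$, and then projecting down to $Z$. The assignment $(\tilde z_0,\ldots,\tilde z_j)\mapsto [\tilde z_0,\ldots,\tilde z_j]$ is required to satisfy: (i) the vertex sequence of $[\tilde z_0,\ldots,\tilde z_j]$ is $(\tilde z_0,\ldots,\tilde z_j)$; (ii) the face relation $\partial_i[\tilde z_0,\ldots,\tilde z_j]=[\tilde z_0,\ldots,\hat{\tilde z}_i,\ldots,\tilde z_j]$; (iii) $\pi_1(Z)$-equivariance under the diagonal deck action on tuples; and (iv) $S_{j+1}$-equivariance in the standard sense. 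Given this data, I would define $\straight(\sigma)$ for $\sigma\colon\Delta^j\to Z$ by choosing any lift $\tilde\sigma$, forming the straight simplex on its ordered vertex sequence, and pushing back down to $Z$; property (iii) makes the result independent of the lift, which delivers the first two bullets of the lemma, while (ii) and (iv) give $\partial$-commutativity and $S_{j+1}$-equivariance.

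The bracket is built by induction on $j$, starting with $[\tilde z_0]$ being the constant simplex. For $j\ge 1$, properties (i) and (ii) together with the already-defined $(j{-}1)$-dimensional brackets assemble a continuous map $\partial\Delta^j\to\widetilde Z$; since $\widetilde Z$ is contractible this extends to some map $\Delta^j\to\widetilde Z$, and any such extension is a valid candidate. The main obstacle is making these extensions simultaneously equivariant under $\pi_1(Z)\times S_{j+1}$ acting on $\widetilde Z^{j+1}$: I would pick a representative in each orbit, choose an extension there, and propagate it by the group action. The only real difficulty is at tuples with nontrivial (necessarily finite, as $\pi_1(Z)$ acts freely on $\widetilde Z$) stabilizer $H$, for which an $H$-invariant extension is required. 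Such an extension exists by a standard equivariant obstruction argument---all homotopy groups of $\widetilde Z$ vanish, so the obstructions to equivariant extension vanish as well---or, when $\widetilde Z$ admits a natural convex-like structure such as hyperbolic space, by averaging through a chosen contraction. This equivariance step is the main technical ingredient of the whole construction.

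For the final bullet, I would build the chain homotopy by a parallel induction on $\widetilde Z$: for each singular simplex $\tilde\sigma\colon\Delta^j\to\widetilde Z$, produce a homotopy $\Delta^j\times[0,1]\to\widetilde Z$ from $\tilde\sigma$ at $t=0$ to $[\tilde\sigma(v_0),\ldots,\tilde\sigma(v_j)]$ at $t=1$, agreeing on $\partial\Delta^j\times[0,1]$ with the lower-dimensional homotopies already built. Existence of the extension again uses contractibility of $\widetilde Z$, and $\pi_1(Z)$-equivariance is arranged by the same orbit-and-invariant-extension argument. The standard prism triangulation of $\Delta^j\times[0,1]$ then converts the homotopy into a $(j{+}1)$-chain in $C_{j+1}(\widetilde Z)$ satisfying the required identity $\partial H(\sigma)+H(\partial\sigma)=\straight(\sigma)-\sigma$ after pushing down to $Z$, completing the proof.
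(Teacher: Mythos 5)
Your construction follows the same route as the paper's: build a straight simplex on each ordered vertex tuple in $\widetilde Z$ by induction on dimension, fill in over the boundary using contractibility of $\widetilde Z$, make one choice per orbit of $\pi_1(Z)\times S_{j+1}$ and propagate by the action, then build the chain homotopy by a parallel induction and the prism decomposition (the paper runs the two inductions simultaneously, which is cosmetic). The one point where you go beyond the paper --- the tuples with nontrivial stabilizer $H$ --- is also the one point where your argument fails. The assertion that ``all homotopy groups of $\widetilde Z$ vanish, so the obstructions to equivariant extension vanish'' is not how equivariant obstruction theory works: one needs the fixed-point set $\widetilde Z^K$ to be nonempty and highly connected for every isotropy group $K$ occurring in the domain. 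The barycenter of $\Delta^j$ is fixed by all of $H$ (every affine permutation of vertices fixes it), so an $H$-equivariant extension would have to send it to a point of $\widetilde Z$ fixed by the deck-transformation components of $H$. If $H$ contains an element $(g,q)$ with $g\neq e$, then $g$ acts freely on $\widetilde Z$, no such fixed point exists, and hence \emph{no} equivariant extension exists at all; the same objection defeats ``averaging through a chosen contraction.'' This situation genuinely occurs: for $Z=K(\mathbb{Z}/2,1)$ with $g$ the nontrivial deck transformation, the tuple $(\tilde z,\, g\tilde z)$ is stabilized by $(g,(01))$.

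The correct dichotomy is this. A stabilizer element with $g\neq e$ forces $g$ to permute the finitely many $z_i$ freely, hence to have finite order, so the bad case arises only when $\pi_1(Z)$ has torsion. When $\pi_1(Z)$ is torsion-free, every stabilizer lies in $\{e\}\times S_{j+1}$ and acts trivially on $\widetilde Z$; the extension problem then descends to the quotient $\Delta^j/H$ (with $H$-invariant boundary data supplied by the inductive hypothesis), where it is solved by ordinary, non-equivariant contractibility --- no obstruction theory needed. So you should either impose torsion-freeness (which covers the paper's main application, where $Z$ is a closed hyperbolic manifold), or, in the presence of torsion, give up $S_{j+1}$-equivariance of $\straight$ itself and obtain the sign-equivariance only for the composite $\symm\circ\straight$. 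To be fair, the paper's own proof says merely ``we make this choice only once per orbit of $\pi_1(Z)\times S_{j+1}$'' and silently elides the same issue; you noticed the right subtlety but resolved it with an argument that does not hold up.
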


\begin{proof}
We construct the straightening operator and the chain homotopy simultaneously, one dimension at a time.  The chain homotopy will be, for each simplex $\sigma : \Delta \rightarrow Z$, a map $H(\sigma) : \Delta \times I \rightarrow Z$ such that the restriction of $H(\sigma)$ to $\Delta \times 0$ is $\sigma$, the restriction to $\Delta \times 1$ is $\straight(\sigma)$, and for each face $F \in \del \Delta$, the restriction of $H(\sigma)$ to $F \times I$ is $H(\sigma \vert_F)$.

For every $0$-dimensional simplex $\sigma^0 \in C_0(Z)$, we have $\straight(\sigma^0) = \sigma^0$ and a constant homotopy $H(\sigma^0)$.  For a simplex $\sigma$ with $\dim \sigma = j > 0$, suppose the straightening and chain homotopy are already defined for every dimension less than $j$.  In particular, $\straight(\del \sigma)$ depends only on the sequence of vertices of the lift $\widetilde{\sigma}$ of $\sigma$ to the universal cover $\widetilde{Z}$.  We lift $\straight(\del \sigma)$ to $\widetilde{Z}$; because $\widetilde{Z}$ is contractible, there is some simplex filling in the lift of $\straight(\del \sigma)$, and we can choose $\straight(\sigma)$ to be the corresponding simplex in $Z$.  We make this choice only once per orbit of $\pi_1(Z) \times S_{j+1}$ on the set $(\widetilde{Z})^{j+1}$ of sequences of vertices in $\widetilde{Z}$.

Having chosen $\straight(\sigma)$, we lift $\sigma$, $\straight(\sigma)$, and $H(\del \sigma)$ to $\widetilde{Z}$ to form a sphere of dimension $j$.  Because $\widetilde{Z}$ is contractible, we can fill in this sphere by a map $\widetilde{H(\sigma)}$ on $\Delta \times I$ that has the prescribed boundary, and let $H(\sigma)$ be the corresponding map into $Z$.
\end{proof}

We also use the anti-symmetrization operator,
\[\symm : C_*(Z) \rightarrow C_*(Z)\]
given by
\[\symm(\sigma^j) = \frac{1}{(j+1)!} \sum_{q \in S_{j+1}} \sign(q) \cdot q(\sigma^j)\]
for every simplex $\sigma^j \in C_j(Z)$.  Gromov states that this operator is chain homotopic to the identity (\cite{Gromov82}, p.~29), and Fujiwara and Manning give the proof in~\cite{Fujiwara11}.

\begin{lemma}[\cite{Fujiwara11}, Appendix B]
Let $Z$ be any topological space.  The anti-symmetrization operator $\symm : C_*(Z) \rightarrow C_*(Z)$ is chain homotopic to the identity.
\end{lemma}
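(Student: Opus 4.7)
The plan is to use the method of acyclic models. Both the identity and $\symm$ are natural chain endomorphisms of the singular chain functor $C_* : \mathrm{Top} \to \mathrm{Chain}$, and they coincide on $C_0$ since $S_1$ is trivial. Because each standard simplex $\Delta^j$ is contractible, its singular chain complex is acyclic in positive degrees, so the acyclic models theorem produces a natural chain homotopy.

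I would make this explicit by constructing $h : C_*(Z) \to C_{*+1}(Z)$ inductively on degree, defining it first on the universal simplices $\iota_j : \Delta^j \to \Delta^j$ and extending by push-forward. Set $h = 0$ in degree $0$. Suppose $h$ has been defined in degrees less than $j$ in a natural way so that the chain-homotopy identity $\del h + h \del = \symm - \mathrm{id}$ holds there. To define $h(\iota_j) \in C_{j+1}(\Delta^j)$, I need a $(j+1)$-chain whose boundary equals
\[
\symm(\iota_j) - \iota_j - h(\del \iota_j).
\]
A short calculation using that $\symm$ commutes with $\del$, together with the inductive hypothesis applied to $\del \iota_j$, shows this expression is a cycle in $C_j(\Delta^j)$. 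Since $\Delta^j$ is contractible and $j \geq 1$, this cycle is a boundary, and any choice of filling gives $h(\iota_j)$. For an arbitrary $\sigma : \Delta^j \to Z$, set $h(\sigma) := \sigma_*(h(\iota_j))$; naturality of $\del$ and $\symm$ under $\sigma_*$ then forces $\del h(\sigma) + h(\del \sigma) = \symm(\sigma) - \sigma$, completing the inductive step.

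The main obstacle is routine bookkeeping rather than any deep point: one must verify that $\symm$ really is a chain map (so that the displayed expression is genuinely a cycle) and that the universal definition via $\iota_j$ is compatible with $\sigma_*$ on both boundaries and symmetrizations. Both are straightforward because $\symm$ is given by a universal formula involving only the vertex-permutation action, which $\sigma_*$ intertwines. The essential topological input is the acyclicity of $C_*(\Delta^j)$ in positive degrees, which powers each inductive filling.
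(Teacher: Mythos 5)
Your proposal is correct. Note that the paper itself gives no proof of this lemma: it simply cites Appendix~B of Fujiwara--Manning, and the argument there is essentially the one you give, namely the method of acyclic models (the singular chain functor is free on the universal simplices $\iota_j$, the models $\Delta^j$ are acyclic, and $\symm$ agrees with the identity in degree $0$ because $S_1$ is trivial). Your inductive construction of $h$ is the standard unwinding of that theorem, and the one genuinely non-formal point --- that $\symm$ commutes with $\del$, which is what makes $\symm(\iota_j) - \iota_j - h(\del\iota_j)$ a cycle --- is exactly the computation you flag; it is a finite sign-bookkeeping check (the faces of a permuted simplex are permuted faces, with compatible signs) and is carried out in the cited reference. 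So there is no gap, only the acknowledged routine verification.
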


Thus, the composition of these two operators 
\[\symm \circ \straight : C_*(Z) \rightarrow C_*(Z),\] 
satisfies the following properties:
\begin{enumerate}
\item $\symm \circ \straight(\sigma)$ depends only on the list of vertices of the lift $\widetilde{\sigma}$ to the universal cover. \label{vertices}
\item For every $q \in S_{j+1}$ and every $\sigma^j \in C_j(Z)$, we have
\[\symm \circ \straight \circ\,q(\sigma^j) = \sign(q) \cdot \symm \circ \straight(\sigma^j).\] \label{transposition}
\item $\symm \circ \straight$ is a chain map, chain homotopic to the identity.
\item $\symm \circ \straight$ does not increase the norm; that is, for every chain $c \in C_*(Z)$, we have
\[\Vert \symm \circ \straight(c) \Vert_\Delta \leq \Vert c \Vert_\Delta.\]
\end{enumerate}

Property~\ref{transposition} is our reason for introducing $\symm$ at all: it allows homotopic simplices with opposite orientations to cancel in a sum or average.

Below we state the setup for the next lemma.  Let $X$ be a topological space, and let $c \in C_*(X)$ be a singular cycle.  We view $c$ as a triple $(\Sigma, c_\Sigma, f)$ where $\Sigma$ is a simplicial complex, $c_\Sigma$ is a simplicial cycle on $\Sigma$, and $f : \Sigma \rightarrow X$ is a continuous map such that $c = f_*c_\Sigma$.  

By a \textbf{\emph{partial coloring}} of $c$ we mean a list $V_1, V_2, \ldots, V_\ell, \ldots$ of disjoint subsets of the set of vertices of $\Sigma$.  According to the partial coloring we classify each simplex as either essential or non-essential; the non-essential simplices are the ones that can be made to disappear in a certain sense.  Accordingly, we define essential simplex by defining non-essential simplex, as follows.  A simplex $\sigma$ of $\Sigma$ is called an \textbf{\emph{essential simplex}} of $c$ if neither of the following conditions holds:
\begin{itemize}
\item $\sigma$ has two distinct vertices in the same $V_\ell$; or
\item $\sigma$ has two vertices that are the same point of $\Sigma$, and the edge between them is a null-homotopic loop in $X$.
\end{itemize}
Let $Z$ be a space with contractible universal cover and let $\alpha : X \rightarrow Z$ be a continuous map that sends all vertices of $c$ in $X$ to the same point of $Z$.  Let $\Gamma_\ell$ denote the subgroup of $\pi_1(Z)$ generated by the $\alpha$-images of the edges of $c$ for which both endpoints are in $V_\ell$.

\begin{lemma}[Amenable Reduction Lemma, p.~25 of~\cite{Gromov09}]\label{amen-red}
Suppose that $\Gamma_\ell$ is an amenable group for every $\ell$.  Then the simplicial norm of the $\alpha$-image of the homology class $[c] \in H_*(X)$ represented by the cycle $c = \sum r_i \sigma_i$ (where $r_i \in \mathbb{R}$ are coefficients and $\sigma_i$ are simplices) satisfies
\[\Vert \alpha_*[c]\Vert_{\Delta} \leq \sum_{\sigma_i \textrm{ essential}} \abs{r_i}.\]
\end{lemma}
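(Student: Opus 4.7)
The plan is to construct from $c$ a cycle in $C_*(Z)$ representing $\alpha_*[c]$ whose $\ell^1$-norm is bounded by $\sum_{\sigma_i \textrm{ essential}} \abs{r_i}$, by averaging many homologous cycles so that the contributions of non-essential simplices cancel. I begin by passing to $c' := (\symm \circ \straight)(\alpha_* c)$, a cycle representing $\alpha_*[c]$ with $\Vert c' \Vert_\Delta \leq \sum_i \abs{r_i}$, and then amend $c'$ homologously so that its coefficients on non-essential simplices all vanish.

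Non-essential simplices of the second kind (repeated vertex $v$ with null-homotopic connecting edge in $X$) are annihilated already by $\symm \circ \straight$: the two repeated vertices map via $\alpha$ to the common basepoint of $Z$, and the null-homotopy of the edge in $X$ pushes forward to a null-homotopy in $Z$, so in any lift to $\widetilde Z$ the two corresponding vertices coincide. Transposing these positions preserves the vertex list, so properties~\ref{vertices} and~\ref{transposition} together force $\symm \circ \straight(\alpha_*(\sigma_i)) = -\symm \circ \straight(\alpha_*(\sigma_i)) = 0$. To handle non-essential simplices of the first kind I amenably average over the $\Gamma_\ell$'s: fix F\o lner sets $F_\ell^N \subseteq \Gamma_\ell$, and for each assignment $g : \bigsqcup_\ell V_\ell \to \pi_1(Z)$ with $g(v) \in F_\ell^N$ when $v \in V_\ell$, form a cycle $c'_g$ by translating the $\widetilde Z$-lift of each vertex $\alpha(v)$ by $g(v)$ and then applying $\symm \circ \straight$. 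A prism-star chain-homotopy in $\widetilde Z$ --- for each vertex $v$ attach $(d+1)$-simplices to the simplices of $c$ containing $v$, interpolating between the old lift $\tilde v$ and the new lift $g(v) \tilde v$, with the lateral faces cancelling in pairs because $\del c = 0$ --- proves that $c'_g$ is homologous to $c'$. Averaging,
\[c^N := \Bigl(\prod_\ell \abs{F_\ell^N}^{-\abs{V_\ell}}\Bigr) \sum_g c'_g\]
is homologous to $c'$ with norm still at most $\sum_i \abs{r_i}$.

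The main obstacle will be showing that the coefficient of each non-essential simplex of the first kind in $c^N$ tends to $0$ as $N \to \infty$. For a simplex with two vertices $v_a, v_b \in V_\ell$, each pair $(g(v_a), g(v_b)) \in F_\ell^N \times F_\ell^N$ has a partner $(g(v_b), g(v_a))$: the transposition $v_a \leftrightarrow v_b$ flips the sign of $\symm \circ \straight$ by property~\ref{transposition}, while matching the two straightened simplices in $\widetilde Z$ requires translating one coordinate by an element $\gamma \in \Gamma_\ell$ given by the $\alpha$-image of the edge $v_a v_b$. The averaged contribution thus equals its own negative up to an error supported on symmetric differences $F_\ell^N \triangle \gamma F_\ell^N$, which by the F\o lner property are $o(\abs{F_\ell^N}^2)$. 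Passing to a subsequential coefficient-wise limit produces a cycle representing $\alpha_*[c]$ whose norm is bounded by $\sum_{\sigma_i \textrm{ essential}} \abs{r_i}$, as desired.
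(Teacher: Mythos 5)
Your proposal is correct and follows essentially the same route as the paper's proof: kill the degenerate-loop simplices using the antisymmetry of $\symm \circ \straight$, average the vertex-translated (straightened, antisymmetrized) cycles over F\o lner sets, and cancel the same-color pairs via an involution whose domain misses only a F\o lner-small portion of $F_\ell^N \times F_\ell^N$. The one (harmless) difference is at the very end: rather than extracting a subsequential coefficient-wise limit cycle --- which need not be a finite singular chain --- it is cleaner to observe that each $c^N$ already yields $\Vert \alpha_*[c]\Vert_{\Delta} \leq \sum_{\sigma_i \textrm{ essential}} \abs{r_i} + \epsilon_N \sum_i \abs{r_i}$ and let $N \to \infty$ in the inequality, exactly as the paper does.
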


\begin{proof}
Given a singular simplex $\sigma \in C_*(X)$ and a path $\gamma : [0, 1] \rightarrow X$ beginning at one vertex of $\sigma$, there is a homotopy $\sigma_t$ pushing the vertex along $\gamma$; the image of each $\sigma_t$ is the union of the image of $\sigma$ with the partial path $\gamma \vert_{[0, t]}$.  Given a singular cycle $c \in C_*(X)$ and a path $\gamma$ beginning at one vertex of $c$, we may apply this process to every simplex of $c$ containing that vertex, to obtain a homotopic (and thus homologous) cycle $\gamma * c$.  More precisely, if $c$ is $(\Sigma, c_\Sigma, f)$ then we modify $f$ by a homotopy supported in a neighborhood of one vertex of $\Sigma$.  Likewise, we may take a path $\gamma$ in $Z$ rather than in $X$, and obtain a cycle $\gamma * \alpha_*c$, for which the straightened cycle $\straight(\gamma * \alpha_*c)$ depends on $\gamma$ only up to homotopy.  That is, if $c$ is $(\Sigma, c_\Sigma, f)$ then $\alpha_*c$ is $(\Sigma, c_\Sigma, \alpha \circ f)$ and we homotope $\alpha \circ f$.

Applying this process to every vertex of $\Sigma$ in $\bigcup_\ell V_\ell$ simultaneously, we obtain an action of the product group $\times_\ell (\Gamma_\ell)^{V_\ell}$ on $c$, given by
\[g \mapsto \symm \circ \straight(g * \alpha_*c),\ g \in \times_\ell (\Gamma_\ell)^{V_\ell}.\]
That is, if $g = (\gamma_v)_{v \in \bigcup_\ell V_\ell}$, then to find $g * \alpha_*c$ we choose disjoint neighborhoods in $\Sigma$ of all vertices $v \in \bigcup_\ell V_\ell$ and for each $v$ we homotope $\alpha \circ f$ in the chosen neighborhood of $v$ to push $\alpha \circ f(v)$ along $\gamma_v$ in $Z$.  We will take the average of cycles $\symm \circ \straight(g * \alpha_*c)$ as $g$ ranges over a large finite subset of $\times_\ell (\Gamma_\ell)^{V_\ell}$.  To choose this subset, we use the definition of amenable group.

One characterization of (discrete) amenable groups is the F\o lner criterion: for every amenable group $\Gamma$, every finite subset $S \subset \Gamma$, and every $\varepsilon > 0$, there is a finite subset $A \subset \Gamma$ satisfying the inequality
\[\frac{\abs{xA\, \Delta\, A}}{\abs{A}} \leq \varepsilon\ \ \forall x \in S,\]
where $\Delta$ denotes the symmetric difference.  In our setting, we choose $S_\ell$ to be the set of $\alpha$-images of edges in $c$ with both endpoints in $V_\ell$, and then apply the F\o lner criterion to find $A_\ell \subset \Gamma_\ell$.  We take the average of $\symm \circ \straight(g * \alpha_*c)$ for $g \in \times_\ell (A_\ell)^{V_\ell} \subset \times_{\ell} (\Gamma_\ell)^{V_\ell}$; the result is some cycle homologous to $\alpha_*c$ which we show has small norm.

First we show that if $\sigma$ is \emph{not} an essential simplex of $c$, then the average of $\symm \circ \straight(g * \alpha_*\sigma)$ has norm at most $\varepsilon$.  If one edge of $\sigma$ is a contractible loop, then every $\symm \circ \straight(g * \alpha_*\sigma)$ is equal to $0$ (using properties~\ref{vertices} and~\ref{transposition} of $\symm \circ \straight$), so the average is $0$.  Thus, we address the case where $\sigma$ has two distinct vertices $v_1$ and $v_2$ in some $V_\ell$.  When averaging over all $g \in \times_\ell (A_\ell)^{V_\ell}$, we average separately over each slice where only the $v_1$ and $v_2$ components of $g$ vary and all other components are fixed.  It suffices to show that the average of $\symm \circ \straight(g * \alpha_* \sigma)$ over each such slice $A_\ell \times A_\ell$ has norm at most $\varepsilon$.  

Let $\gamma_1, \gamma_2 \in A_\ell$ denote the $v_1$ and $v_2$ components of $g$, which we write as $g = g_{(\gamma_1, \gamma_2)}$, and let $x \in \Gamma_\ell$ denote the $\alpha$-image of the edge in $c$ between $v_1$ and $v_2$.  The edge $x$ in $\alpha_*\sigma$ becomes an edge $\gamma_1^{-1}x\gamma_2$ in $g_{(\gamma_1, \gamma_2)} * \alpha_*\sigma$.  Consider the involution 
\[(\gamma_1, \gamma_2) \mapsto (x\gamma_2, x^{-1}\gamma_1)\]
on the square subset
\[(\gamma_1, \gamma_2) \in (xA_\ell \cap A_\ell) \times (x^{-1}A_\ell \cap A_\ell) \subset A_\ell \times A_\ell.\]
The path resulting from $(x\gamma_2, x^{-1}\gamma_1)$ is the inverse of the path resulting from $(\gamma_1, \gamma_2)$, and thus (using property~\ref{transposition} of $\symm \circ \straight$) we have
\[\sum_{(\gamma_1, \gamma_2) \in (xA_\ell \cap A_\ell) \times (x^{-1}A_\ell \cap A_\ell)} \symm \circ \straight(g_{(\gamma_1, \gamma_2)} * \alpha_*\sigma) = 0.\]
In other words, only those $(\gamma_1, \gamma_2)$ outside the square subset contribute to the average.  By the F\o lner criterion we have
\[\abs{ xA_\ell \cap A_\ell} \geq \left(1 - \frac{\varepsilon}{2}\right) \abs{A_\ell},\]
and so
\[\abs{ (xA_\ell \cap A_\ell) \times (x^{-1}A_\ell \cap A_\ell)} \geq \left( 1-\frac{\varepsilon}{2} \right)^2 \abs{A_\ell \times A_\ell} > (1-\varepsilon) \abs{A_\ell \times A_\ell}.\]
Thus the average over $A_\ell \times A_\ell$ satisfies
\[\frac{1}{\abs{A_\ell \times A_\ell}} \sum_{(\gamma_1, \gamma_2) \in A_\ell \times A_\ell} \symm \circ \straight(g_{(\gamma_1, \gamma_2)} * \alpha_*\sigma) \leq \]
\[\leq \frac{1}{\abs{A_\ell \times A_\ell}} \sum_{(\gamma_1, \gamma_2) \not\in (xA_\ell \cap A_\ell) \times (x^{-1}A_\ell \cap A_\ell)} 1 < \varepsilon.\]

Taking the sum over all simplices $\sigma_i$ of $c$, we obtain the inequality
\[\Vert \alpha_*[c] \Vert_\Delta \leq \sum_{\sigma_i \text{ essential}} \abs{r_i} + \sum_{\sigma_i \text{ not essential}} \varepsilon\abs{r_i},\]
and taking the limit as $\varepsilon \rightarrow 0$ we obtain the inequality of the lemma statement.
\end{proof}

Recall the definition of stratification: if a stratum $S$ intersects the closure $\overline{S'}$ of another stratum $S'$, then $S \subseteq \overline{S'}$.  In this case we write $S \preceq S'$.  If neither $S \preceq S'$ nor $S' \preceq S$, then we say the two strata are \textbf{\emph{incomparable}}.

The next lemma involves the notion of \textbf{\emph{stratified simplicial norm}} (as with simplicial norm, it is really a semi-norm), which for a homology class $h$ on a stratified space $X$ is the infimum of norms of all cycles $c$ representing $h$ that are consistent with the stratification, in the following sense illustrated in Figure~\ref{strat}.  Gromov gives two conditions: \emph{ord}(er) and \emph{int}(ernality) (\cite{Gromov09}, p.~27).  We use these two conditions plus two more:
\begin{itemize}
\item   We require that for each simplex of $c$, the image of the interior of each face (of any dimension) must be contained in one stratum.  (This condition may be implicit in Gromov's paper.)  We call this the \emph{cellular} condition.
\item The (\emph{ord}) condition states that the image of each simplex of $c$ must be contained in a totally ordered chain of strata; that is, the simplex does not intersect any two incomparable strata.  
\item The (\emph{int}) condition states that for each simplex of $c$, if the boundary of a face (of any dimension) maps into a stratum $S$, then the whole face maps into $S$.  
\item For technical reasons involving the amenable reduction lemma (Lemma~\ref{amen-red}), we require that if two vertices of a simplex of $c$ map to the same point $v \in X$, then the edge between them must be constant at $v$.  We call this the \emph{loop} condition.
\end{itemize}
The stratified simplicial norm of a homology class $h$ on a space $X$ with stratification $\mathcal{S}$ is denoted by $\Vert h \Vert_{\Delta}^\mathcal{S}$.

\begin{figure}
\begin{center}
\includegraphics[width=3in]{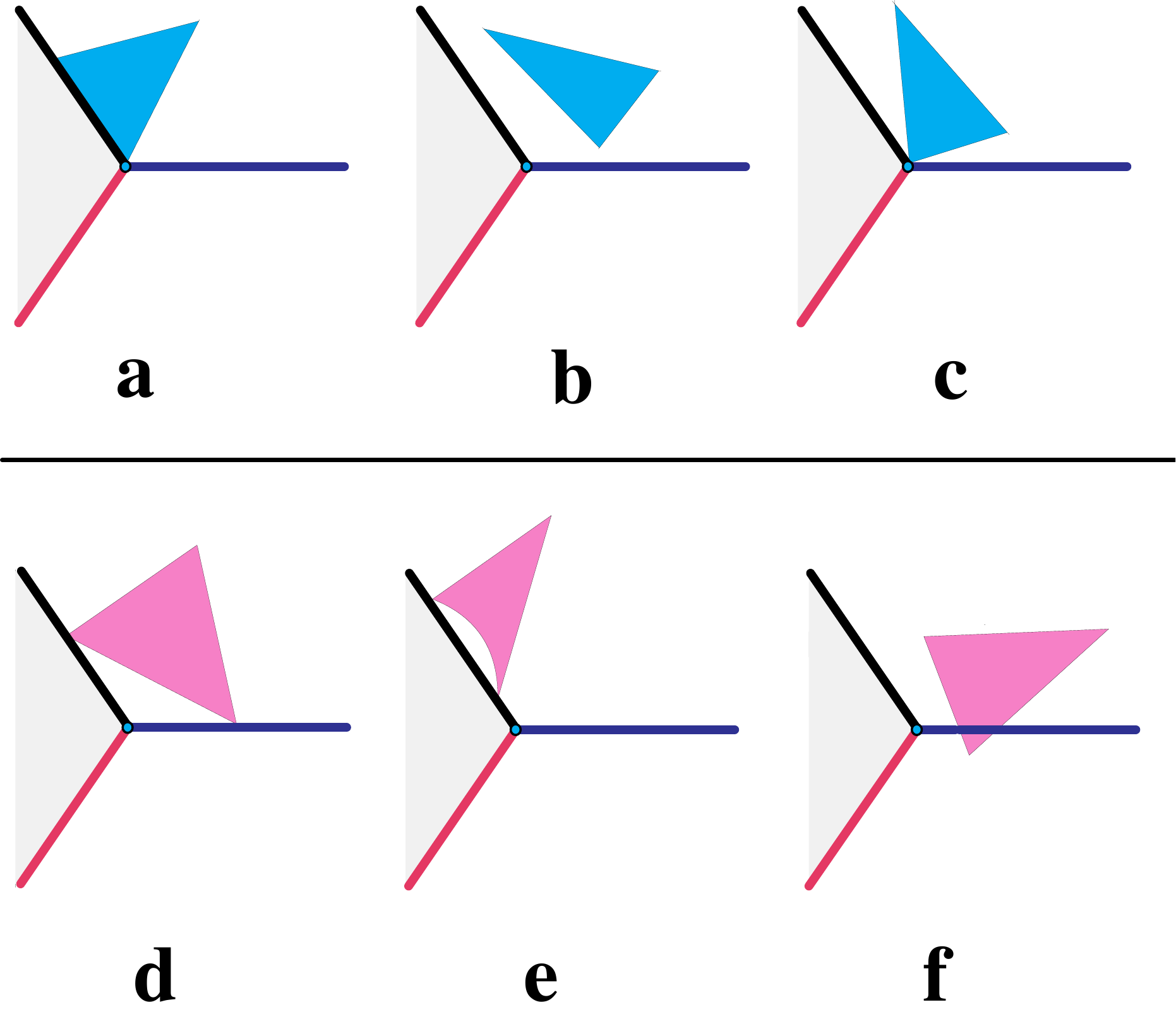}
\end{center}
\caption{An example stratification of the plane: one $0$-dimensional stratum, three $1$-dimensional strata, and three $2$-dimensional strata.  In diagrams (a), (b), and (c), the shaded triangle satisfies all four criteria for simplices used to compute the stratified simplicial norm.  Diagram (d) violates the (\emph{ord}) condition, diagram (e) violates the (\emph{int}) condition, and diagram (f) violates the \emph{cellular} condition.}\label{strat}
\end{figure}

\begin{lemma}[Localization Lemma, p.~27 of~\cite{Gromov09}]\label{localization}
Let $X$ be a closed manifold with stratification $\mathcal{S}$ consisting of finitely many connected submanifolds, and let $j$ be an integer between $0$ and $\dim X$.  Let $Z$ be a space with contractible universal cover, and let $\alpha : X \rightarrow Z$ be a continuous map such that the $\alpha$-image of the fundamental group of each stratum of codimension less than $j$ is an amenable subgroup of $\pi_1(Z)$.  Let $X_{-j} \subseteq X$ denote the union of strata with codimension at least $j$, and let $U$ be a neighborhood of $X_{-j}$ in $X$.  Then the $\alpha$-image of every $j$-dimensional homology class $h \in H_j(X)$ satisfies the bound
\[\Vert \alpha_*h \Vert_{\Delta} \leq \Vert h_U\Vert_{\Delta}^\mathcal{S},\]
where $h_U \in H_j(U, \del U)$ denotes the restriction of $h$ to $U$, obtained from the composite homomorphism 
\[H_j(X) \rightarrow H_j(X, X \setminus U) \rightarrow H_j(U, \del U),\]
where the last map is the excision isomorphism.
\end{lemma}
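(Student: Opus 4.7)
The plan is to apply the Amenable Reduction Lemma (Lemma~\ref{amen-red}) to an absolute cycle $c'$ on $X$ representing $h$, using a partial coloring whose color classes correspond to the strata of $\mathcal{S}$ of codimension less than $j$. To build $c'$, I first choose a relative cycle $c_U$ representing $h_U \in H_j(U, \del U)$ that satisfies the four stratified conditions (\emph{cellular}, \emph{ord}, \emph{int}, \emph{loop}) and has norm at most $\Vert h_U \Vert_\Delta^\mathcal{S} + \varepsilon$. Because $h$ maps to $h_U$ under the composite in the statement of the lemma, the long exact sequence of the pair $(X, X \setminus U)$ lets me lift $c_U$ to an absolute cycle $c'$ with $[c'] = h$, via a filling chain supported in $\overline{X \setminus U}$. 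After a fine simplicial subdivision and a general-position perturbation against the submanifolds of $\mathcal{S}$, I may assume that $c'$ globally satisfies the four stratified conditions, that each of its simplices lies entirely in $\overline{U}$ or entirely in $\overline{X \setminus U}$ (giving a clean decomposition $c' = c_U + c_{X \setminus U}$), and that $\alpha \circ f$ sends all vertices of $\Sigma$ to a single basepoint of $Z$.

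Next I set up the partial coloring: for each stratum $S_\ell$ of codimension less than $j$, let $V_\ell$ be the set of vertices of $\Sigma$ whose image under $f$ lies in $S_\ell$, and observe that the $V_\ell$'s are pairwise disjoint. By the (\emph{int}) condition, any edge of $\Sigma$ whose endpoints both lie in $V_\ell$ has its image entirely in $S_\ell$, so after choosing the basepoint-connecting paths to pass through $S_\ell$, the corresponding element of $\pi_1(Z)$ lies in (a conjugate of) the image of $\pi_1(S_\ell) \to \pi_1(Z)$. By the hypothesis on strata of codimension less than $j$, each $\Gamma_\ell$ is then amenable, and the Amenable Reduction Lemma applies.

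The heart of the argument is a pigeonhole observation about the filling chain. Any simplex $\sigma$ of $c_{X \setminus U}$ avoids $X_{-j}$, so by the (\emph{ord}) condition its image is contained in a totally ordered chain of strata, all of codimension less than $j$. Since the dimensions along such a chain are strictly increasing and lie in the $j$-element set $\{\dim X - j + 1, \ldots, \dim X\}$, the chain contains at most $j$ strata; but $\sigma$ has $j+1$ vertices, so two distinct vertices of $\Sigma$ must land in the same $V_\ell$, making $\sigma$ non-essential. Combining with Lemma~\ref{amen-red} then gives
\[\Vert \alpha_* h \Vert_\Delta = \Vert \alpha_*[c'] \Vert_\Delta \leq \sum_{\sigma_i \text{ essential}} \abs{r_i} \leq \Vert c_U \Vert_\Delta \leq \Vert h_U \Vert_\Delta^\mathcal{S} + \varepsilon,\]
and letting $\varepsilon \to 0$ completes the proof.

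I expect the main obstacle to be the preliminary construction of $c'$: one has to arrange simultaneously that the four stratified conditions hold on \emph{all} simplices of the absolute cycle (not just on $c_U$), that the simplices split cleanly along $\partial U$, and that all vertices map to a single basepoint in $Z$ after a small homotopy of $\alpha \circ f$. These are standard uses of simplicial subdivision and transversality against the submanifolds of $\mathcal{S}$, but this is precisely the step in Gromov's Section~3.3 sketch that requires the most care to fill in rigorously.
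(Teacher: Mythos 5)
Your overall skeleton matches the paper's: extend a stratified relative cycle $c_U$ to an absolute cycle, color vertices by strata of codimension less than $j$, show the added simplices are non-essential by a pigeonhole on totally ordered chains of strata (your count of at most $j$ strata versus $j+1$ vertices is exactly the paper's), and finish with the Amenable Reduction Lemma. But there is a genuine gap at the step you yourself flag as ``standard'': the claim that a fine subdivision plus a general-position perturbation makes the filling chain satisfy the four stratified conditions. This is not achievable by transversality. The (\emph{ord}) condition requires each simplex to meet only a totally ordered chain of strata; near a low-dimensional stratum lying in the closures of two incomparable higher-dimensional strata, arbitrarily small simplices in general position will meet both, so no amount of subdivision or perturbation produces (\emph{ord}) (one would instead need the chain to be simplicially adapted to $\mathcal{S}$, and then matched along $\del U$ to the arbitrary singular chain $\del c_U$ --- a different and nontrivial construction). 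Both halves of your argument lean on this unavailable hypothesis: the pigeonhole needs (\emph{ord}) on the filling simplices, and the amenability of $\Gamma_\ell$ needs (\emph{int}) to force edges with both endpoints in $S_\ell$ to lie in $S_\ell$.

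The paper's proof circumvents this with a device your proposal is missing: a partition $\{P_S\}$ of $X$, one piece per stratum, built so that $P_S$ lies in the $\varepsilon$-neighborhood of $S$, pieces of incomparable strata are at distance greater than $\delta$, and $N_\delta(P_S)$ deformation retracts into $S$. The filling chain is barycentrically subdivided until its simplices have diameter less than $\delta$, and its vertices are colored by membership in $P_{S_\ell}$ rather than in $S_\ell$; the metric separation of the $P_S$'s then yields the totally-ordered-chain property for the \emph{labels} without the simplices themselves satisfying (\emph{ord}), and the retraction $N_\delta(P_{S_\ell}) \to S_\ell$ pushes the relevant edges into $S_\ell$ so that $\Gamma_\ell \subseteq \alpha_* \pi_1(S_\ell)$ is amenable. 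Two interpolating cylinders $c_1, c_2$ reconcile the $S_\ell$-coloring on $\del c_U$ with the $P_{S_\ell}$-coloring on the subdivided filling chain. To repair your proof you would need to import this partition (or an equivalent adapted-triangulation argument), not merely invoke general position.
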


To prove the localization lemma, we construct a partition of $X$ as follows.

\begin{lemma}
Let $X$ be a closed manifold, with a metric space structure and stratified by finitely many submanifolds.  For every $\varepsilon > 0$, there is a partition of $X$ consisting of one subset $P_S$ for each stratum $S$, and some $\delta > 0$, with the following properties:
\begin{itemize}
\item If $S$ and $S'$ are incomparable strata, then $\dist(P_S, P_{S'}) > \delta$, $\dist(P_S, S') > \delta$, and $\dist(S, P_{S'}) > \delta$.
\item If $x \in P_S$, then $\dist(x, S) < \varepsilon$.
\item Let $N_\delta(P_S)$ denote the $\delta$-neighborhood of $P_S$.  There is a homotopy beginning with the inclusion $N_\delta(P_S) \hookrightarrow X$ and ending with a map with image in $S$.
\end{itemize}
\end{lemma}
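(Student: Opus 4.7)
The plan is to construct each $P_S$ as the portion of a small tubular neighborhood of $S$ that is not already claimed by a lower-dimensional stratum, working inductively up the partial order $\preceq$. First I would enumerate the strata $S_1,\ldots,S_N$ in order of non-decreasing dimension; because the boundary of each stratum is a union of strictly smaller-dimensional strata, this enumeration refines $\preceq$, so $S_i\prec S_j$ implies $i<j$. For each $S_i$ I would fix a tubular neighborhood $T_i$ of $S_i$ in $X$ together with a retraction $\pi_i\colon T_i\to S_i$ realizing $T_i$ as a strong deformation retract onto $S_i$. This is a standard property of Whitney stratifications, and in the setting relevant to this paper it can also be assembled directly from the finitely many local models furnished by Lemma~\ref{stratification}(\ref{models}).

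Next I would choose radii $r_1\gg r_2\gg\cdots\gg r_N>0$ and a single scale $\delta>0$, writing $T_i^\circ:=\{x\in T_i : \dist(x,S_i)<r_i\}$, satisfying: (a) $r_i<\varepsilon$ for every $i$; (b) for every pair of incomparable strata $S_i$, $S_j$, the tubes $T_i^\circ$, $T_j^\circ$ and the strata $S_i$, $S_j$ themselves are pairwise separated by more than $2\delta$; and (c) $N_\delta(T_i^\circ)\subseteq T_i$ for every $i$. The radii are picked inductively in $i$: if $S_i$ is incomparable with some $S_j$ ($j<i$) already handled, the closures $\overline{S_i}$ and $\overline{S_j}$ meet only in common sub-strata $S_k$ with $k<\min(i,j)$, and such points already lie inside the previously chosen small tubes $T_k^\circ$. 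Outside the union of these smaller tubes, $S_i$ and $\overline{T_j^\circ}$ are disjoint closed subsets of a compact subset of $X$, hence at strictly positive distance; shrinking $r_i$ enforces the required $2\delta$-separation. Finiteness of the stratification lets the process terminate with a single $\delta>0$ extracted at the end.

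Finally I would set $P_{S_i}:=T_i^\circ\setminus\bigcup_{j<i}P_{S_j}$. Every $x\in X$ lies in the tube $T_{S(x)}^\circ$ of its own stratum, so the $P_{S_i}$ partition $X$. The first bulleted separation condition follows from (b) since $P_{S_i}\subseteq T_i^\circ$ for each $i$; the distance-$\varepsilon$ condition follows from (a); and the homotopy condition follows from (c) together with the tubular deformation retract provided by $\pi_i\colon T_i\to S_i$, since $N_\delta(P_{S_i})\subseteq N_\delta(T_i^\circ)\subseteq T_i$.

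The main technical obstacle is arranging that the tube radii can consistently be chosen to enforce the required separation between all pairs of incomparable strata. The key geometric input is that incomparable strata have closures meeting only in strictly smaller-dimensional strata; once those ``bad'' intersections are absorbed into the already-fixed smaller tubes, the remaining portions of the incomparable strata are genuinely disjoint closed subsets of a compact set, which lets a uniform $\delta>0$ be extracted. Establishing the compatible system of tubular neighborhoods in the first place is a standard piece of stratified geometry, but is the only other nontrivial ingredient; once it is in hand, the rest of the argument is bookkeeping.
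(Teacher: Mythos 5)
Your overall architecture (small tubes around the strata, taken in order of increasing dimension, with each $P_{S_i}$ defined by deleting the earlier pieces) is essentially the paper's construction. But your condition (b) is unsatisfiable, and your verification of the first bullet rests on it. Two incomparable strata typically have closures that meet in a common smaller stratum (e.g.\ two open edges of a triangle sharing a vertex, or, in this paper, two one-dimensional strata of $X$ abutting the same point of a maximum-multiplicity trajectory). In that case $\dist(S_i,S_j)=0$, hence also $\dist(T_i^\circ,T_j^\circ)=0$ and $\dist(S_i,T_j^\circ)=0$, no matter how the radii are chosen: shrinking $r_i$ shrinks the tube but does not move the stratum $S_i$ itself, and $S_j\subseteq T_j^\circ$ always. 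This is exactly why the lemma's first bullet asks for separation of $P_S$ from $S'$ and from $P_{S'}$, but \emph{not} of $S$ from $S'$. Your own inductive paragraph implicitly concedes the point --- you only obtain separation ``outside the union of these smaller tubes'' --- but you then assert that this ``enforces the required $2\delta$-separation'' of the full tubes and strata, which it does not, and the final sentence ``the first bulleted separation condition follows from (b) since $P_{S_i}\subseteq T_i^\circ$'' uses only containment in the tube, not the deletion of the earlier tubes, so it inherits the false claim.

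The repair is to make the deletion do the work, which is what the paper does: each $P_S$ is built as a small neighborhood of the \emph{truncated}, compact piece $S\setminus\bigcup_{\text{smaller}}P$, with the requirement that the three-times-larger neighborhood $N_{3\varepsilon_{\dim S}}$ of that piece still lies inside a tubular neighborhood $U_S$ chosen disjoint from $\overline{S'}$ whenever $S\cap\overline{S'}=\emptyset$. The separation estimates then come from the nested scales $\varepsilon_{\dim X}<\cdots<\varepsilon_0$: for incomparable $S,S'$ one gets $\dist(P_S,S')\geq 2\varepsilon_{\dim S}$ and $\dist(P_S,P_{S'})\geq 2\varepsilon_{\dim S}-\varepsilon_{\dim S'}$. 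In your notation, you would need to prove the corresponding relative statement for $P_{S_i}=T_i^\circ\setminus\bigcup_{j<i}T_j^\circ$: that the portion of $T_i^\circ$ lying within $\delta$ of $S_j$ or of $P_{S_j}$ is entirely swallowed by the smaller tubes $T_k^\circ$ around the common sub-strata, which requires the smaller tubes to cover not just the points of $\overline{S_i}\cap\overline{S_j}$ but a neighborhood of them of definite size relative to $r_i+\delta$ --- hence the factor of $3$ and the strict nesting of scales in the paper's argument. The remaining parts of your proof (the covering/partition property, the $\varepsilon$-closeness, and the homotopy via the tubular retraction) are fine.
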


\begin{figure}
\begin{center}
\includegraphics[width=3.5in]{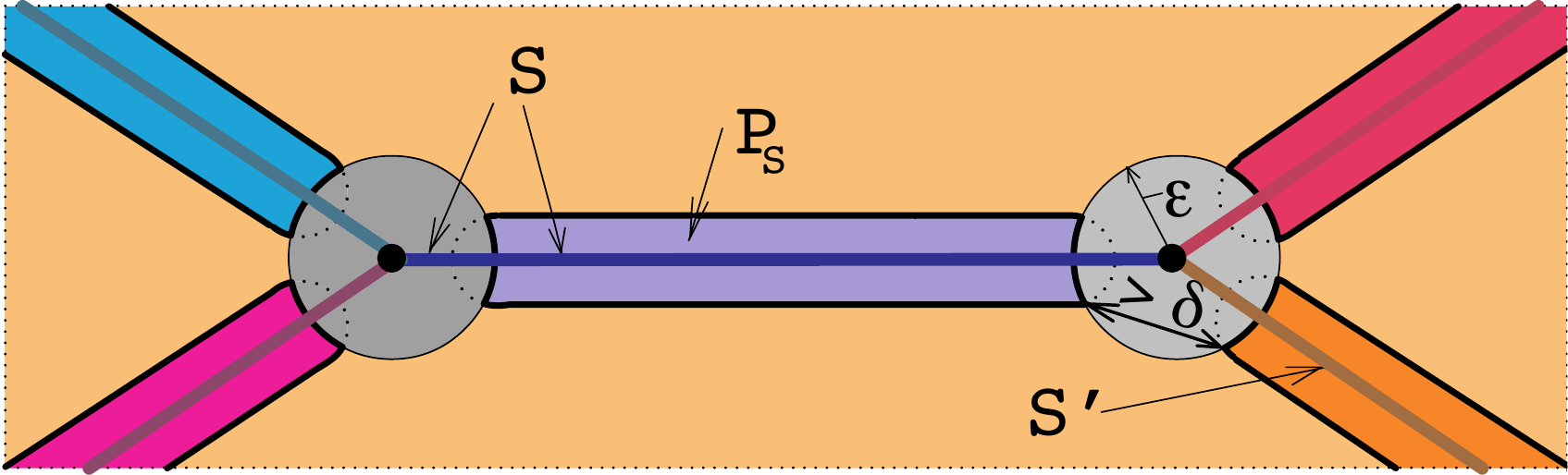}
\end{center}
\caption{For each stratum $S$, its approximation $P_S$ lies within the $\varepsilon$-neighborhood of $S$.  For every two incomparable strata $S$ and $S'$, the distance between the sets $P_S$ and $P_{S'}$ exceeds $\delta = \delta(\varepsilon)$.}\label{part}
\end{figure}

\begin{proof}
Figure~\ref{part} depicts the relationship between the strata $S$ and the subsets $P_S$.  The sets $P_S$ are determined by a choice of small numbers
\[0 < \varepsilon_{\dim X} < \cdots < \varepsilon_0 < \varepsilon,\]
which we choose inductively.  
\begin{itemize}
\item Step 0: We choose $\varepsilon_0 < \varepsilon$ such that for every $0$-dimensional stratum $S_0$, the ball $N_{3\varepsilon_0}(S_0)$ is Euclidean and has the following property: if $S_0$ is disjoint from the closure $\overline{S'}$ of another stratum $S'$ (i.e., if $S_0 \not\preceq S'$), then $N_{3\varepsilon_0}(S_0)$ is also disjoint from $\overline{S'}$.  We put $P_{S_0} = N_{\varepsilon_{0}}(S_0)$.
\item Step 1: First we find a tubular neighborhood $U_{S_1}$ of every $1$-dimensional stratum $S_1$, such that if $S_1$ is disjoint from some $\overline{S'}$, then $U_{S_1}$ is also disjoint from $\overline{S'}$.  The portion of $S_1$ that is outside the union of all $P_{S_0}$ is a compact set.  Therefore, we can choose $\varepsilon_1 < \varepsilon_0$ such that for every $1$-dimensional stratum $S_1$, we have
\[N_{3\varepsilon_1}\left(S_1 \setminus \bigcup_{S_0} P_{S_0}\right) \subseteq U_{S_1}.\]
We put 
\[P_{S_1} = N_{\varepsilon_1}\left(S_1 \setminus \bigcup_{S_0} P_{S_0}\right) \setminus \bigcup_{S_0} P_{S_0}.\]
\item Step $k$: Having chosen $\varepsilon_{k-1} < \cdots < \varepsilon_0 < \varepsilon$, we choose $\varepsilon_k < \varepsilon_{k-1}$ much as in Step 1.  We find a tubular neighborhood $U_{S_k}$ of every $k$-dimensional stratum $S_k$, such that if $S_k$ is disjoint from some $\overline{S'}$, then $U_{S_k}$ is also disjoint from $\overline{S'}$.  We choose $\varepsilon_k < \varepsilon_{k-1}$ such that for every $k$-dimensional stratum $S_k$, we have
\[N_{3\varepsilon_k}\left(S_k \setminus \bigcup_{i = 0}^{k-1}\bigcup_{S_i} P_{S_i}\right) \subseteq U_{S_k}.\]
We put
\[P_{S_k} = N_{\varepsilon_k}\left(S_k \setminus \bigcup_{i = 0}^{k-1}\bigcup_{S_i} P_{S_i}\right) \setminus \bigcup_{i = 0}^{k-1}\bigcup_{S_i} P_{S_i}.\]
\item Step $\dim X$: Formally, the procedure from Step $k$ applies.  However, we note that each tubular neighborhood $U_{S_{\dim X}}$ is equal to all of $S_{\dim X}$, and so we have
\[P_{S_{\dim X}} = S_{\dim X} \setminus \bigcup_{i = 0}^{\dim X - 1} \bigcup_{S_i} P_{S_i}.\]
We do choose $\varepsilon_{\dim X}$ just as in Step $k$.
\end{itemize}
We choose $\delta < \varepsilon_{\dim X}$.  The second and third properties in the lemma statement are immediate.  For the first property, suppose $S$ and $S'$ are incomparable.  In particular $S \not\preceq S'$.  Then all of $S'$ lies at least $3\varepsilon_{\dim S}$ away from $S \cap P_S$, whereas all of $P_S$ lies within $\varepsilon_{\dim S}$ of $S \cap P_S$, and so we have
\[\dist(P_S, S') \geq 2\varepsilon_{\dim S} > \delta,\]
and likewise with $S$ and $S'$ reversed.  To check $\dist(P_S, P_{S'})$, assume without loss of generality $\dim S \leq \dim S'$.  Then we have
\[\dist(P_S, P_{S'}) \geq 2\varepsilon_{\dim S} - \varepsilon_{\dim S'} \geq \varepsilon_{\dim S} > \delta.\]
\end{proof}

\begin{proof}[Proof of Lemma~\ref{localization}]
Here is the rough idea of the proof: we extend each relative cycle $c_U$ representing $h_U$ to a cycle $c$ representing $h$.  We construct a partial coloring on the vertices of $c$ with one subset $V_\ell$ for each stratum $S_\ell$ of codimension less than $j$.  If $c$ is chosen carefully, then every new simplex of $c$ is not essential, so the amenable reduction lemma (Lemma~\ref{amen-red}) implies that the new simplices do not contribute to the simplicial norm of $\alpha_*h$.

In fact, the proof gets more complicated because we need to guarantee that the $\alpha$-images of the edges of $c$ with both endpoints in $V_\ell$ generate a subgroup of $\alpha_*\pi_1(S_\ell)$, and thus an amenable subgroup of $\pi_1(Z)$.  (Every subgroup of an amenable group is amenable.)  We construct the partition $\{P_S\}$ with $\varepsilon$ chosen to be smaller than the distance from $X_{-j}$ to the complement of $U$, and use the $\delta$ arising from the construction of $\{P_S\}$.  Then we construct chains $c_1$, $c_2$, and $c_\delta$ so that $c = c_U + c_1 + c_2 + c_\delta$ is a cycle homologous to $h$, by the following method depicted in Figure~\ref{loc}.

\begin{figure}
\begin{center}
\includegraphics[width=3.5in]{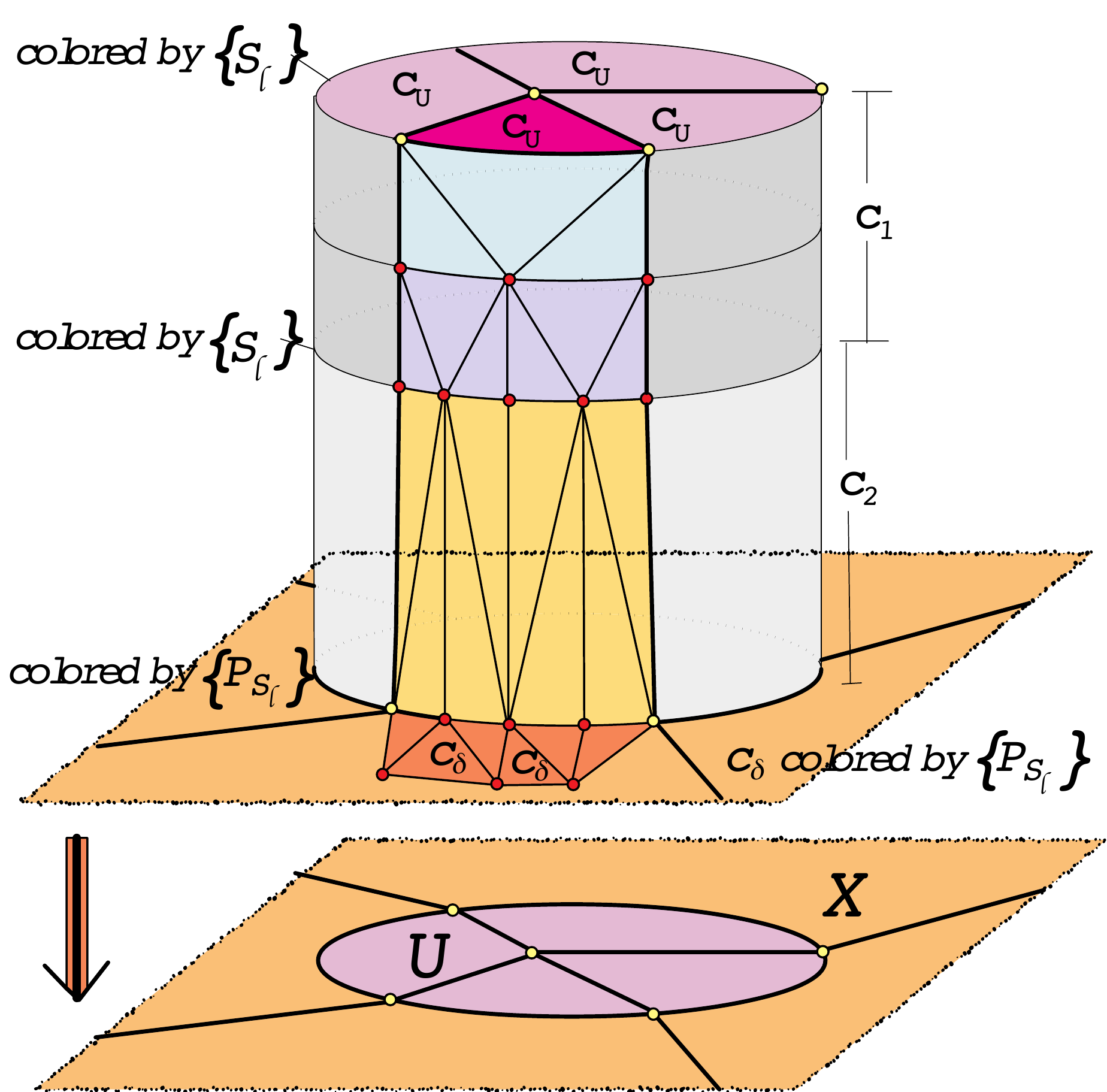}
\end{center}
\caption{The singular cycle $c$ is constructed as the sum $c = c_U + c_1 + c_2 + c_\delta$, where $c_U$ is the restriction to $U$, and $c_\delta$ is the restriction to the complement of $U$, and the cylinders $c_1$ and $c_2$ connect $c_U$ to $c_\delta$.  The vertices of $c_U$ and $c_1$ are colored according to which stratum $S_\ell$ they are in, the vertices of $c_\delta$ are colored according to which subset $P_{S_\ell}$ they are in, and the cylinder $c_2$ connects the two coloring methods.}\label{loc}
\end{figure}

\begin{itemize}
\item To construct $c_\delta$, we first take a cycle $c'$ representing $h$ such that $c_U$ is its restriction to $U$, and then obtain $c_\delta$ by iterated barycentric subdivision of $c' - c_U$ so that the diameter of each simplex is less than $\delta$.  For each vertex $v$ of $c_\delta$, if $v \in P_{S_\ell}$, then $v \in V_\ell$.
\item $c_2$ is the cylinder $-\del c_\delta \times [0, 1]$, triangulated in such a way that no new vertices are created, and mapped to $X$ by the projection $-\del c_\delta \times [0, 1] \rightarrow -\del c_\delta$.  The vertices of $-\del c_\delta \times 1$ are identified with the vertices of $c_\delta$, so their partial coloring is determined by their membership in $P_{S_\ell}$.  The vertices of $-\del c_\delta \times 0$ have a different partial coloring: if $v \in S_\ell$, then $v \in V_\ell$.
\item $c_1$ is a subdivision of the cylinder $\del c_U \times [0, 1]$, mapped to $X$ by the projection $\del c_U \times [0, 1] \rightarrow \del c_U$.  The end $\del c_U \times 0$ is identified with $\del c_U$ and is not subdivided.  The end $\del c_U \times 1$ is divided by barycentric subdivision so that it may be identified with the $0$ end of $c_2$, which is equal to $-\del c_\delta$.  The middle of the cylinder is subdivided by concatenating the chain homotopies corresponding to barycentric subdivision, one for each iteration.  For each vertex $v$ of $c_1$, if $v \in S_\ell$, then $v \in V_\ell$.
\end{itemize}

First we verify that every simplex in $c_1$, $c_2$, and $c_\delta$ is not essential.  The (\emph{ord}) condition on $\del c_U$ and the choice of $\delta$ imply that the labels of the $(j+1)$ vertices of each simplex correspond to $S_\ell$ in a totally ordered chain of strata.  Because each $S_\ell$ has codimension between $0$ and $j-1$, and every two strata of the same dimension are incomparable, two of the vertices must have the same label.  If these two vertices are identical in $c$, then the edge between them must be constant; this results from the \emph{loop} property on $c_U$ and the fact that barycentric subdivision destroys loops.

Next we need to check that the $\alpha$-images of the edges with both endpoints in each subset $V_\ell$ generate an amenable subgroup of $\pi_1(Z)$.  In the current setup, the $\alpha$-images of these edges are not even loops in $Z$.  We correct this problem by modifying $c$ by a homotopy that adds a path to each vertex, as in the proof of Lemma~\ref{amen-red}; the path is chosen as follows.  For each stratum $S_\ell$, we choose one special point $x_\ell \in S_\ell$.  We homotope $c$ so that every vertex $v \in V_\ell$ travels along some path ending at $x_\ell$, chosen as follows:

\begin{itemize}
\item If $v \in c_\delta$ and $v \in V_\ell$, then $v \in P_{S_\ell}$, so we choose the path to be the trajectory of $v$ under the homotopy sending $N_\delta(P_{S_\ell})$ into $S_\ell$.  Then we concatenate this path with any path contained in $S_\ell$ and ending at $x_\ell$.
\item If $v$ is in the $0$ end of $c_2$, and $v \in V_\ell$, then $v \in S_\ell$.  If $v$ has an edge to some vertex in the $1$ end of $c_2$ (and thus in $c_\delta$) that is in $P_{S_\ell}$, then $v \in N_\delta(P_{S_\ell})$, so as above we take the trajectory of $v$ under the homotopy sending $N_\delta(P_{S_\ell})$ into $S_\ell$.  If $v$ does not have such an edge, then we take a constant path at $v$ instead.  Then we concatenate this first path (either of the two options) with any path contained in $S_\ell$ and ending at $x_\ell$.
\item If $v$ is any other vertex in $c_1$---that is, not in the $1$ end---and $v \in V_\ell$, then we take a path contained in $S_\ell$ from $v$ to $x_\ell$.
\end{itemize}
Then we homotope $\alpha$ (or $c$ again) so that the image of every $x_\ell$ is the same point in $Z$.  Now the $\alpha$-images induced by a given $V_\ell$ do generate a subgroup of $\pi_1(Z)$.  In order to show that this subgroup is amenable, it suffices to show that it is contained in the amenable subgroup $\alpha_*i_*\pi_1(S_\ell)$, where $i : S_\ell \hookrightarrow X$ denotes the inclusion.  Every edge $\gamma$ with both endpoints in $V_\ell$ is a loop at $x_\ell$; we show that its homotopy class $[\gamma]$ in $\pi_1(X, x_\ell)$ is in $i_*\pi_1(S_\ell)$---that is, $\gamma$ is homotopic through loops at $x_\ell$ to a loop entirely contained in $S_\ell$.  Then $\alpha_*[\gamma] \in \alpha_*i_*\pi_1(S_\ell)$.  

We construct the homotopy on $\gamma$ as follows.  If $\gamma$ is in $c_2$ or $c_\delta$, and at least one endpoint is in $P_{S_\ell}$, then all of $\gamma$ is in $N_\delta(P_{S_\ell})$, so we homotope $\gamma$ by the homotopy sending $N_\delta(P_{S_\ell})$ into $S_\ell$.  If $\gamma$ is in $c_2$ or $c_1$, and both endpoints are in $S_\ell$, then we use the fact that the \emph{cellular} property and the (\emph{int}) property together are preserved by barycentric subdivision.  Thus $\gamma$ is already contained in $S_\ell$.

By this method, we produce a cycle $c$, extending $c_U$ and homotopic to $h$, with a partial coloring such that every new simplex is not essential and such that, after a homotopy of $\alpha$, every edge of $c$ with both endpoints in $V_\ell$ is a loop representing an element of $\alpha_*\pi_1(S_\ell)$.  Applying the amenable reduction lemma (Lemma~\ref{amen-red}), we obtain
\[\Vert \alpha_* h \Vert_{\Delta} \leq \Vert c_U \Vert_\Delta,\]
and taking the infimum over all such $c_U$,
\[\Vert \alpha_* h \Vert_{\Delta} \leq \Vert h_U \Vert_{\Delta}^\mathcal{S}.\]
\end{proof}

\begin{proof}[Proof of Theorem~\ref{main-theorem}]
From Part~\ref{strat-X} of Lemma~\ref{stratification}, the vector field $v$ gives rise to a stratification of $X$; doubling this stratification produces a stratification of the closed manifold $D(X)$ by submanifolds.  From Part~\ref{models} of Lemma~\ref{stratification}, there are only finitely many strata, because the compact set $X$ can be covered by finitely many neighborhoods each matching one of the local models.

In order to apply the localization lemma (Lemma~\ref{localization}), we need to check that for each stratum $S$ of $X$, the subgroup $\alpha_*\pi_1(S)$ of $\pi_1(Z)$ is an amenable group.  We have assumed that this is true if $S \subseteq \del X$.  (Every subgroup of an amenable group is amenable.)  Otherwise, we apply Parts~\ref{cover} and~\ref{strat-X} of Lemma~\ref{stratification}: $S$ is one connected component of $\Gamma^{-1}(\sigma) \setminus \del X$ for some stratum $\sigma$ of $\mathcal{T}(v)$, and the entire preimage $\Gamma^{-1}(\sigma)$ is a trivial bundle $\sigma \times F$, for some fiber $F$.  Under the stratification of $X$, the fiber $F$ is an interval subdivided by finitely many points from $\del X$.  There is a homotopy on the $1$-dimensional part of $F$ that pushes each open subinterval to the next point of $\del X$, which gives a homotopy on $\Gamma^{-1}(\sigma) \setminus \del X$ that starts with the inclusion into $X$ and ends with a map into $\del X$.  Applying this homotopy to loops in $S$ we see that $\pi_1(S)$ is contained in $\pi_1(\del X)$, so its $\alpha$-image is an amenable group.

Now we apply the localization lemma (Lemma~\ref{localization}), with $j = \dim X = n + 1$.  Then $X_{-j}$ consists of the $0$-dimensional strata, which are the intersections of the maximum-multiplicity trajectories with $\del X$.  Let $x_1, \ldots, x_r$ denote these $0$-dimensional strata; then we have
\[r \leq (n + 2) \cdot \mm(v),\]
because each trajectory has at most $n$ intermediate points of $\del X$, and $n + 2$ points in total.  Applying Part~\ref{models} of Lemma~\ref{stratification}, around each point $x_i$ we choose a neighborhood $U_i \subseteq X$ matching one of the local models, small enough that the various $U_i$ are disjoint, and let $D(U_i) \subseteq D(X)$ denote the double of $U_i$.  We take $U = \bigcup_{i = 1}^r D(U_i)$.  If $\mathcal{S}$ denotes the stratification on $D(X)$, then there exists some constant $M_n$ depending only on $n$, satisfying
\[\Vert [D(U_i), \del D(U_i)]\Vert_{\Delta}^\mathcal{S} \leq M_n\]
for all $i$.  Thus, the conclusion of the localization lemma gives
\[\Vert \alpha_*[D(X)]\Vert_{\Delta} \leq \Vert [U, \del U]\Vert_{\Delta}^\mathcal{S} \leq \sum_{i = 1}^r M_n \leq M_n \cdot (n+2)\cdot\mm(v).\]
\end{proof}

\begin{proof}[Proof of Theorem~\ref{special-case}]
We construct a degree-1 map $\alpha: D(X) \rightarrow M$ that sends all of the boundary $\del X$ to a single point.  Given such a map, we have $\alpha_*\pi_1(\del X) = 0$ (and $0$ is an amenable group), so Theorem~\ref{main-theorem} gives
\[\mm(v) \geq \const(n) \Vert \alpha_*[D(X)]\Vert_{\Delta} = \const(n) \Vert [M]\Vert_{\Delta} \geq \const(n) \cdot \Vol M,\]
where the value of $\const(n)$ is not fixed and may change between inequalities, but is always positive.  To construct $\alpha$, let $B$ be an open ball containing $\overline{U}$, and let $B'$ be a slightly smaller ball with $\overline{U} \subset B' \subset \overline{B'} \subset B$.  There is a degree-1 map $M \rightarrow M$ obtained by collapsing $\overline{B'}$ to a single point $*$, and stretching the cylinder $B \setminus B'$ to fill $B \setminus *$.  We define $\alpha$ on $X$ as the restriction of this map on $M$, and define $\alpha$ on the second copy of $X$ as the constant map at $*$.  Then $\alpha$ on all of $D(X)$ has degree $1$, and we have $\alpha(\del X) = *$.
\end{proof}

\section{Future directions}

One immediate follow-up question is how large the constant should be in Theorem~\ref{main-theorem}.  The $3$-dimensional case of Theorem~\ref{special-case} (Theorem~7.5 of~\cite{Katz09}) suggests that we might hope for a constant of $1$ for every $n$.  However, the constant obtained in our proof is much weaker and is a little confusing to compute.  It would be nice to compute an explicit upper bound for the stratified simplicial volume of the coordinate neighborhood of each trajectory of a versal vector field.  Also useful would be to check whether any examples might refute a possible constant of $1$ in Theorem~\ref{main-theorem}.

A second question for further study comes from a special case of Theorem~\ref{special-case}.  Let $f : M \rightarrow \mathbb{R}$ be a Morse function, and let $X$ be the space obtained by deleting a small open ball around each critical point of $f$.  If the negative gradient field $v = -\nabla f$ is traversally generic, then there are finitely many maximum-multiplicity trajectories in $X$, and if $f$ satisfies Morse-Smale transversality, then there are finitely many $n$-times-broken trajectories in $M$.  We might hope that these two sets of trajectories correspond in a fixed ratio depending on $n$.  Thus, Theorem~\ref{special-case} suggests the following conjecture.

\begin{conjecture}
Let $M$ be a closed, oriented hyperbolic manifold of dimension \mbox{$n + 1 \geq 2$}, and let $f : M \rightarrow \mathbb{R}$ be a Morse-Smale function.  Then we have
\[\#(n\text{-times-broken trajectories of }-\nabla f) \geq \const(n) \cdot \Vol M.\]
\end{conjecture}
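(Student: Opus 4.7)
The plan is to deduce the conjecture from Theorem~\ref{main-theorem} combined with a quantitative comparison between the maximum-multiplicity trajectories of $v := -\nabla f$ on $X := M \setminus \bigsqcup_i U_i$ (where the $U_i$ are small open balls around the critical points of $f$) and the $n$-times-broken trajectories of $-\nabla f$ on $M$. The overall structure of the argument is
\[\#(n\text{-times-broken trajectories}) \;\geq\; \const(n) \cdot \#\mm(v) \;\geq\; \const(n) \cdot \Vol M,\]
the right inequality coming from Theorem~\ref{main-theorem} and the left from a Morse-theoretic local correspondence.

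I would first verify the hypotheses of Theorem~\ref{main-theorem}. Take $Z := M$, which has contractible universal cover $\mathbb{H}^{n+1}$, and construct a degree-$1$ map $\alpha : D(X) \to M$ in the spirit of the proof of Theorem~\ref{special-case}. On the first copy of $X$, compose the inclusion $X \hookrightarrow M$ with a degree-$1$ self-map of $M$ that, for each $i$, collapses a slightly enlarged ball $\overline{U_i'} \supset \overline{U_i}$ to a point $*_i \in M$ and stretches an outer collar back to cover $M \setminus *_i$; this carries $\del X$ into the finite set $\{*_1, \dots, *_r\}$. On the second copy of $X$, choose an embedded tree $T \subset M$ through every $*_i$ and extend the boundary map $\del X \to \{*_i\} \subset T$ to any continuous map $X \to T$; since $\dim T < \dim M$, the second copy contributes $0$ to the degree, so $\alpha_*[D(X)] = [M]$. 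Each component of $\del X$ is a sphere $S^n$ whose fundamental group is trivial (or $\mathbb{Z}$ when $n = 1$) and hence amenable, so Theorem~\ref{main-theorem} applies and, combined with the proportionality theorem, yields
\[\#\mm(v) \;\geq\; \const(n) \cdot \Vert [M] \Vert_\Delta \;\geq\; \const(n) \cdot \Vol M.\]

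The harder task is the left-hand inequality, for which the plan is to build a multi-valued map from maximum-multiplicity trajectories of $v$ on $X$ to $n$-times-broken trajectories of $-\nabla f$ on $M$ with fibers bounded by a constant depending only on $n$. The idea is that any $\gamma \in \mm(v)$, having reduced multiplicity equal to $n$ concentrated at its tangencies with the spheres $\del U_i$, limits (as the radii of the $U_i$ shrink) to a concatenation of gradient segments between critical points, that is, to a broken trajectory. The Morse-Smale transversality hypothesis makes the local Morse picture near each critical point $p_i$ match the versal polynomial model $P_m(u,\vec{x})$ of Section~\ref{traversing-setup}: each intermediate tangency of $\gamma$ with some $\del U_i$ should account precisely for the index drop occurring as the corresponding Morse-theoretic segment passes through $p_i$, so that summing over the $n$ breakings of a single broken trajectory gives total reduced multiplicity $n$ for the associated family of maximum-multiplicity trajectories.

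The central obstacle is turning this heuristic into a rigorous bounded-fiber statement: one must show that each $n$-times-broken trajectory is the limit of a bounded number of elements of $\mm(v)$, and that every element of $\mm(v)$ limits to some broken trajectory. This requires a careful analysis, already nontrivial in the $3$-dimensional case of Theorem~7.5 of~\cite{Katz09}, of the local structure of the compactified moduli spaces near the critical points, together with verifying that the Morse-Smale hypothesis alone generically suffices to make $-\nabla f|_X$ traversally generic (or can be arranged by a small perturbation that preserves both hypotheses). Once this correspondence is in place, the conjecture follows by combining the two inequalities above.
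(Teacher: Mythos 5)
Your proposal reproduces the paper's own heuristic for why this statement should hold, but the statement appears in the paper only as a conjecture precisely because the steps you defer are the ones nobody has carried out. The first half of your argument --- constructing a degree-$1$ map $\alpha : D(X) \to M$ that collapses the boundary spheres and invoking Theorem~\ref{main-theorem} to get $\#\mm(v) \geq \const(n)\cdot\Vol M$ --- is sound and is essentially Theorem~\ref{special-case} applied to $U = \bigsqcup_i U_i$ (a finite union of small balls around the critical points lies inside a single topological ball, obtained by thickening an embedded tree, so Theorem~\ref{special-case} already covers this case). But it applies \emph{only under the hypothesis that $v = -\nabla f|_X$ is traversally generic}, and nothing in the Morse--Smale condition obviously guarantees this: traversal genericity is a condition on the tangency pattern of $-\nabla f$ along the chosen spheres $\del U_i$, and verifying it is exactly the ``technical analysis'' the paper flags as missing. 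Your fallback of achieving it by a small perturbation is self-defeating: perturbing $v$ near $\del X$ destroys the identification of $v$ with $-\nabla f$, which is what the second half of your argument depends on, while perturbing $f$ itself may change the set of $n$-times-broken trajectories you are trying to count.

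The second half --- that the assignment sending $\gamma \in \mm(v)$ to its limiting broken trajectory is well defined, lands on $n$-times-broken trajectories, and has fibers of size at most $\const(n)$ --- is the entire content of the conjecture beyond what Theorem~\ref{special-case} already gives, and you correctly name it as ``the central obstacle'' without supplying an argument. In particular you would need to rule out elements of $\mm(v)$ that accumulate their reduced multiplicity $n$ at tangencies not accounted for by passages near critical points (for instance, high-multiplicity tangencies with a single sphere $\del U_i$ caused by the geometry of the chosen sphere rather than by the index of the critical point), and, for a fixed $n$-times-broken trajectory, bound the number of nearby maximum-multiplicity trajectories uniformly in $f$ and $M$. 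Neither is addressed. As written, the proposal is an accurate expansion of the paper's motivating discussion, not a proof.
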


Inconveniently, the proof method outlined above appears to require a lot of technical analysis to verify the traversally generic property.

\emph{Acknowledgments.} The authors would like to thank Larry Guth (Hannah's advisor) for initiating the collaboration, actively supervising most of our meetings, and suggesting improvements to the exposition in the paper.

\bibliography{bib-simpvol}{}
\bibliographystyle{amsplain}
\end{document}